\documentclass[11pt]{article}

\usepackage{amsmath}
\usepackage{amsthm}
\usepackage{amssymb}
\usepackage[numbers,sort&compress]{natbib}
\usepackage{mathtools}
\usepackage{booktabs}
\usepackage{placeins}
\usepackage[font=small,labelfont=bf,labelsep=period,margin=8mm,skip=6pt]{caption}
\usepackage[hidelinks,hypertexnames=false]{hyperref}

\theoremstyle{plain}
\newtheorem{theorem}{Theorem}[section]
\newtheorem{lemma}[theorem]{Lemma}
\newtheorem{corollary}[theorem]{Corollary}
\newtheorem{proposition}[theorem]{Proposition}

\theoremstyle{remark}

\newcommand{\G}{\Gamma}
\newcommand{\sq}{\operatorname{sinc}}
\newcommand{\rC}{\mathrm C}
\newcommand{\rZ}{\mathrm Z}
\newcommand{\rU}{\mathrm U}
\newcommand{\PC}{\mathcal P_{\rC}}
\newcommand{\PZ}{\mathcal P_{\rZ}}
\newcommand{\PU}{\mathcal P_{\rU}}
\newcommand{\Pc}{P_{\rC}}
\newcommand{\Pz}{P_{\rZ}}
\newcommand{\Pu}{P_{\rU}}
\newcommand{\wC}{\frac{(\frac12)_n^7(\frac14)_n(\frac34)_n}{(1)_n^9}\bigl(\tfrac1{4096}\bigr)^{n}}
\newcommand{\wZ}{\frac{(\frac12)_n^5(\frac13)_n(\frac23)_n(\frac14)_n(\frac34)_n}{(1)_n^9}\bigl(-\tfrac{27}{256}\bigr)^{n}}
\newcommand{\wU}{\frac{(1)_n^7}{(\frac12)_n^5(\frac13)_n(\frac23)_n}\bigl(\tfrac4{27}\bigr)^{n}\frac1{n^7}}
\newcommand{\file}[1]{\nolinkurl{#1}}
\newcommand{\ip}[1]{\langle\varepsilon,#1\rangle}

\title{\bf Borwein--Bradley-type identities for Ramanujan-type series for $1/\pi^4$}

\author{Noam Shalev}

\date{}

\begin{document}
\makeatletter\g@addto@macro\normalsize{\setlength\abovedisplayskip{7pt plus 2pt minus 3pt}\setlength\belowdisplayskip{7pt plus 2pt minus 3pt}}\makeatother\normalsize
\maketitle

\begin{abstract}
We prove ten conjectures of Z.-W.~Sun on Ramanujan-type series for $1/\pi^4$ and $\pi^4$ with harmonic numbers. They follow by comparing Taylor coefficients in new Borwein--Bradley-type identities, whose products depend only on $x^3$, $x^4$, $x^6$ or $x^8$. These identities come from parametric extensions of the series of Cullen and Zhao. We derive the extensions from K.~C.~Au's Wilf--Zeilberger (WZ) seeds, and their sums are trigonometric. Our proofs use Guillera's periodicity argument and give new proofs of the original series. In the same way, Dougall's ${}_5F_4$ sum gives half-integer analogues of the identities of Koecher, Borwein--Bradley and Cohen, whose Taylor coefficients are Ap\'ery-like series for products of zeta values.
\end{abstract}
\noindent\textbf{Keywords:} Ramanujan-type series, Ap\'ery-like series, WZ method, Dougall's summation, harmonic numbers, Borwein--Bradley identities

\section{Introduction and preliminaries}

Many Ap\'ery-like series for zeta values are Taylor coefficients of generating functions in one or more parameters. The summand involves finite products in the parameters, and the sum equals a simple series in them whose coefficients are the zeta values. The classical example is the identity of Koecher \cite{koecher80}, found independently by Leshchiner \cite{leshchiner81},
\[
\frac12\sum_{n\ge1}\frac{(-1)^{n+1}}{n^3\binom{2n}n}\,\frac{5n^2-x^2}{n^2-x^2}\prod_{k=1}^{n-1}\Bigl(1-\frac{x^2}{k^2}\Bigr)
=\sum_{k\ge1}\frac1{k(k^2-x^2)},
\]
which generates the values $\zeta(2s+3)$ and reduces at $x=0$ to Ap\'ery's series for $\zeta(3)$. Borwein and Bradley \cite{borweinbradley1997} found experimentally a generating function for $\zeta(4s+3)$,
\begin{equation}\label{eq:bb}
\frac52\sum_{n\ge1}\frac{(-1)^{n+1}}{\binom{2n}n}\,\frac{n}{n^4-x^4}\prod_{k=1}^{n-1}\frac{k^4+4x^4}{k^4-x^4}
=\sum_{k\ge1}\frac{k}{k^4-x^4},
\end{equation}
which Almkvist and Granville \cite{almkvistgranville1999} proved. Both are specialisations of a two-parameter identity conjectured by H.~Cohen and proved by Bradley \cite{bradley2002} and, independently, by Rivoal \cite{rivoal2004simultaneous},
\begin{equation}\label{eq:cohen}
\frac12\sum_{n\ge1}\frac{(-1)^{n+1}}{n\binom{2n}n}\,\frac{5n^2-y^2}{n^4-y^2n^2-x^4}\prod_{k=1}^{n-1}\frac{(k^2-y^2)^2+4x^4}{k^4-y^2k^2-x^4}
=\sum_{k\ge1}\frac{k}{k^4-y^2k^2-x^4},
\end{equation}
which is Koecher's identity at $x=0$ and \eqref{eq:bb} at $y=0$. Thus \eqref{eq:bb} is the restriction of a two-parameter identity to a line on which its products become products in $x^4$; we call identities whose products depend only on $x^r$ \emph{Borwein--Bradley-type identities}. Bailey, Borwein and Bradley \cite{bbb06} found
\[
3\sum_{n\ge1}\frac1{\binom{2n}n(n^2-x^2)}\prod_{k=1}^{n-1}\frac{k^2-4x^2}{k^2-x^2}=\sum_{k\ge1}\frac1{k^2-x^2}=\frac{1-\pi x\cot\pi x}{2x^2},
\]
and the Hessami Pilehroods \cite{pp08,pp11} proved and found many other identities of this kind by the Wilf--Zeilberger (WZ) method.

Ramanujan's series for $1/\pi$ \cite{ramanujan1914} have analogues for higher powers of $\pi$. The first series for $1/\pi^2$ were found and proved by Guillera \cite{guillera2002wz,guillera2003new}. For $1/\pi^4$ two rational series of this kind were known: one found by J.~Cullen in 2010 (see \cite{guillera2016bilateralsums}), and one by Y.~Zhao in 2017, who in the same MathOverflow post\footnote{\url{https://mathoverflow.net/questions/281009}} also gave an \emph{upside-down} series for $8\pi^4$, one in which the Pochhammer quotient is inverted \cite{guillerarogers2014} (see also \cite{guillera2016bilateralsums,cohenguillera2021,au2025wzseeds}). Cullen's, Zhao's and the upside-down series are, respectively,
\begin{align}
\sum_{n\ge0}\wC\,\Pc(n) &= \frac{2048}{\pi^4},\label{eq:cullen}\\
\sum_{n\ge0}\wZ\,\Pz(n) &=\frac{768}{\pi^4},\label{eq:zhao}\\
\sum_{n\ge1}\wU\,\Pu(n) &= 8\pi^4,\label{eq:upside}
\end{align}
with the polynomials
\begin{equation}\label{eq:polys}
\begin{aligned}
\Pc(n) &= 43680n^4 + 20632 n^3 + 4340n^2 +466n+21,\\
\Pz(n) &= 4528n^4  + 3180 n^3 + 972 n^2 + 147n +9,\\
\Pu(n) &= 92n^3 - 84n^2 + 27n-3.
\end{aligned}
\end{equation}
 Cohen and Guillera \cite{cohenguillera2021} survey all known formulas of this kind. Z.-W.~Sun has published many conjectures in which the summand of such a series is twisted by (generalized) harmonic numbers $H^{(r)}_n=\sum_{k=1}^n k^{-r}$ \cite{sun2022harmonicsummands,sun2024binomialI,sun2023binomialII}. All three series were proved by Au \cite{au2025wzseeds} using \emph{WZ seeds}, which are hypergeometric terms in several parameters from which a WZ pair can be generated in every lattice direction (a second proof of \eqref{eq:cullen} was given by Guillera \cite{guillera2025flawless}). Many other series of this kind are found and proven in \cite{au2026harmonic}.

\subsection*{Main results}

In this paper we find new Borwein--Bradley-type identities for \eqref{eq:cullen}--\eqref{eq:upside}, and use them to prove ten conjectures of Sun (Theorem~\ref{thm:sun}); as far as we can tell, none of them had been proved before. Each conjecture follows by comparing Taylor coefficients in one of these identities. A typical example is the sextic identity: with $\omega=e^{2\pi i/3}$ and $P_{\rC,3}(n) = 1344 n^4 + 1480 n^3  +812 n^2 + 214n + 21$, we have
\begin{multline}\label{eq:sextic}
\sum_{n\ge0}\wC\Bigl[\Pc(n)-\frac{64x^6P_{\rC,3}(n)}{(2n+1)^6}\Bigr]\frac{\prod_{k=0}^{n-1}\bigl(1-\frac{64x^6}{(2k+1)^6}\bigr)^2}{\prod_{k=1}^{2n+1}\bigl(1-\frac{x^6}{k^6}\bigr)}\\
=\frac{2048}{\pi^4}\,\frac{\prod_{k\ge0}\bigl(1-\frac{64x^6}{(2k+1)^6}\bigr)}{\prod_{k\ge1}\bigl(1-\frac{x^6}{k^6}\bigr)}
=\frac{2048}{\pi}\,x^3\cot\pi x\,\cot\pi\omega x\,\cot\pi\omega^2x.
\end{multline}
Comparing coefficients of $x^6$ in \eqref{eq:sextic} yields the following identity, conjectured by Sun \cite[Conj.~5.47(i)]{sun2023binomialII}:
\[
\sum_{n\ge0}\wC\Bigl[\Pc(n)\bigl(127H^{(6)}_{2n}-2H^{(6)}_n\bigr)+\frac{1323(4n+1)}{(2n+1)^3}\Bigr]=\frac{126976\,\pi^2}{945}.
\]

These identities come from a family of parametric extensions of \eqref{eq:cullen}--\eqref{eq:upside} (Theorems~\ref{thm:CZ} and~\ref{thm:upside}). In each of them the summand is the original one multiplied by a polynomial and finite products in the parameters, and the right-hand side is trigonometric. On special lines through the origin all these products depend only on one power $x^r$; besides \eqref{eq:sextic}, this gives identities with products in $x^3$, $x^4$ and $x^8$ (Corollary~\ref{cor:lines}). Their Taylor coefficients give infinitely many other series of Sun's type. Since no known value of any series is used, we also obtain new proofs of \eqref{eq:cullen}--\eqref{eq:upside}.

In a similar manner, from Chu and Zhang's acceleration \cite{chuzhang2014} of Dougall's ${}_5F_4$ sum we obtain half-integer analogues of the identities of Koecher, Borwein and Bradley, and Cohen, and two cubic identities (Section~\ref{sec:5f4}). The coefficients of these are Ap\'ery-like series for products of zeta values, for instance the seemingly new series
\[
\sum_{n\ge1}\frac{(-1)^{n-1}}{n^3\binom{2n}n}\Bigl(5e_3(n)+\frac{6e_2(n)}{n^3}+\frac{6e_1(n)}{n^6}+\frac6{n^9}\Bigr)=\frac23\zeta(3)^4+\frac43\zeta(3)\zeta(9),
\]
where $e_r(n)$ is the $r$-th elementary symmetric function of $1^{-3},2^{-3},\dots,(n-1)^{-3}$.

\subsection*{Method}
Au's WZ seed for Dougall's ${}_7F_6$ gives Cullen's and Zhao's series along two directions of its parameter lattice. On one hyperplane, which we call the \emph{Gamma reflection hyperplane}, every Gamma factor of Dougall's product has a partner under $z\mapsto1-z$, and so by Euler's reflection formula $\G(z)\G(1-z)=\pi/\sin\pi z$ the product becomes trigonometric (\S\ref{sec:principle}). On this hyperplane the series along the two directions become four-parameter extensions of Cullen's and Zhao's series with trigonometric sums (Theorem~\ref{thm:CZ}); for the upside-down series the same happens on a hyperplane of another seed of Au (Theorem~\ref{thm:upside}). Au notes that multi-parameter extensions of this kind exist in his framework \cite[\S3.4]{au2025wzseeds}, \cite[Remark~3.3]{au2026qwzseeds}; what is new here is their explicit trigonometric form on the Gamma reflection hyperplane and what follows from it. On this hyperplane the usual WZ summation formula \cite[Proposition~2.1]{au2025wzseeds} does not evaluate these series, because a boundary series does not vanish. Our proofs follow instead Guillera's periodicity arguments \cite{guillera2016bilateralsums,guillera2020bilateral}: a certificate from the seed makes each sum periodic in one parameter, the difference of the two sides becomes a trigonometric polynomial of low degree after multiplication by trigonometric factors, and it vanishes at the points where the series terminates.

In sections~\ref{sec:principle} and~\ref{sec:theorems} we introduce the Gamma reflection hyperplane and state the extensions. In Sections~\ref{sec:lines} and~\ref{sec:sun} we derive the Borwein--Bradley-type identities and Sun's conjectures. In Section~\ref{sec:5f4} we cover the analogous identities for Dougall's ${}_5F_4$, and in Section~\ref{sec:proofs} we prove the main theorems.

\subsection*{Further identities}
The method of this paper is not limited to the three series treated here. In experiments with other lattices and directions of Au's seeds, in particular the half lattice $n\in\mathbb N+\frac12$ and points off the Gamma reflection hyperplane, we have found numerically Borwein--Bradley-type identities for further Ramanujan-type series. Among their Taylor coefficients are several open conjectures of Sun and others, and odd-zeta analogues of the identities of Koecher and of Borwein and Bradley. We hope to return to them in future work.

\section{The Gamma reflection hyperplane of Dougall's ${}_7F_6$ seed}\label{sec:principle}

Au's seed for Dougall's ${}_7F_6$ \cite[\S3.3]{au2025wzseeds} (called \textsf{Dougall7F6} in his Mathematica package \textsf{WZSeedTools} \cite{au2026harmonic}) is
\[
f(a,b,c,d,e;m)=(-1)^{a+e}(a+2m)\,C\,\frac{\G(a+m)\,\G(1+2a-b-c-d-e+m)\prod_{y}\G(y+m)}{\G(m+1)\,\G(-a+b+c+d+e+m)\prod_{y}\G(1+a-y+m)},
\]
where $y$ runs over $\{b,c,d,e\}$ and $C$ is a quotient of Gamma functions that does not depend on $m$. We write $p=(a,b,c,d,e)$ and $f(p;m)$ for $f(a,b,c,d,e;m)$. The $m$-sum is the very-well-poised ${}_7F_6$ of Dougall's theorem, and $C$ is obtained by dividing the summand by Dougall's closed form. By Au's criterion \cite[Thm.~3.3]{au2025wzseeds}, $f$ has a WZ mate in every lattice direction $v=(A;M)\in\mathbb Z^5\times\mathbb Z$ with $M\ge0$, including the parameter directions, those with $M=0$. Cullen's series comes from $v_{\rC}=(-1,-1,-1,-1,-1;2)$ and Zhao's from $v_{\rZ}=(1,0,0,0,0;1)$ \cite[\S3.4]{au2025wzseeds}, at the base point $p=(\frac12,\dots,\frac12)$. Let
\begin{equation}\label{eq:Dougall}
D(p)=\frac{\prod_{y}\G(1+a-y)\ \prod_{y<y'<y''}\G(1+a-y-y'-y'')}{\G(1+a)\,\G(1+a-b-c-d-e)\ \prod_{y<y'}\G(1+a-y-y')}
\end{equation}
be Dougall's product for the terminating very-well-poised ${}_7F_6$ (see \cite{dougall1907}, \cite[\S4.3]{bailey1935}), continued to all parameters; here $y$ runs over $\{b,c,d,e\}$, and $y<y'$ and $y<y'<y''$ over its six pairs and four triples (for the order $b<c<d<e$). For the directions $v_{\rC}$ and $v_{\rZ}$, the series of the WZ pair, as functions of $p$, appear numerically to equal $f(p;0)\,D(p)$; Theorem~\ref{thm:CZ} proves this on the following hyperplane. The \emph{Gamma reflection hyperplane} is
\[
H=\{1+2a=b+c+d+e\}.
\]
On $H$, every Gamma factor of Dougall's product has a reflection partner. For instance, the arguments $u=1+a-e$ and $u'=1+a-b-c-d$ of two factors in the numerator of $D$ satisfy $u+u'=2+2a-b-c-d-e$, so $u+u'=1$ exactly when $1+2a=b+c+d+e$.

On $H$ we write $(b,c,d,e)=\frac12+\theta$ with $\theta\in\mathbb C^4$, so that $a=\frac12(1+\sigma)$ with $\sigma=\theta_1+\theta_2+\theta_3+\theta_4$, and we put
\begin{equation}\label{eq:phi}
\begin{aligned}
\varphi_1 &= \tfrac12(\theta_1+\theta_2+\theta_3-\theta_4),&\qquad \tau_1 &= \tfrac12(\theta_1+\theta_2+\theta_3+\theta_4),\\
\varphi_2 &= \tfrac12(\theta_1+\theta_2-\theta_3+\theta_4), & \tau_2 &= \tfrac12(\theta_1+\theta_2-\theta_3-\theta_4),\\
\varphi_3 &= \tfrac12(\theta_1-\theta_2+\theta_3+\theta_4), & \tau_3 &= \tfrac12(\theta_1-\theta_2+\theta_3-\theta_4),\\
\varphi_4 &= \tfrac12(-\theta_1+\theta_2+\theta_3+\theta_4), & \tau_4 &=\tfrac12(\theta_1-\theta_2-\theta_3+\theta_4).
\end{aligned}
\end{equation}
Thus $\varphi_1,\varphi_2,\varphi_3,-\varphi_4$ are the four half-sums $\frac12(\theta_1\pm\theta_2\pm\theta_3\pm\theta_4)$ with an odd number of minus signs, and $\tau_1,\dots,\tau_4$ the four with an even number.  The map $\theta\mapsto\varphi$ is its own inverse, and $\varphi_i=\frac\sigma2-\theta_{5-i}$. We define
\[
V \coloneqq \prod_{i=1}^4\cos\pi\theta_i,\qquad E
\coloneqq \prod_{j=1}^4\cos\pi\tau_j .
\]

\begin{lemma}\label{lem:trig}
$V-E=\sin\pi\varphi_1\,\sin\pi\varphi_2\,\sin\pi\varphi_3\,\sin\pi\varphi_4$.
\end{lemma}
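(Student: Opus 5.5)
The plan is to expand all three products of trigonometric functions into sums of cosines by the product-to-sum formulas and compare the resulting linear combinations term by term. The identity is purely trigonometric, so nothing from the seed or from Dougall's product is needed. Only the linear algebra of the half-sums in \eqref{eq:phi} matters.

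\emph{Step 1 (expanding $V$).} Repeated use of $\cos A\cos B=\frac12[\cos(A+B)+\cos(A-B)]$ gives
\[
V=\frac18\sum_{\epsilon_2,\epsilon_3,\epsilon_4=\pm1}\cos\pi(\theta_1+\epsilon_2\theta_2+\epsilon_3\theta_3+\epsilon_4\theta_4).
\]
The eight arguments are, up to an overall sign that $\cos$ ignores, $2\tau_1,\dots,2\tau_4$ (an even number of minus signs) and $2\varphi_1,2\varphi_2,2\varphi_3,2\varphi_4$ (an odd number). Hence
\[
V=\tfrac18\Bigl[\textstyle\sum_j\cos2\pi\tau_j+\sum_i\cos2\pi\varphi_i\Bigr].
\]

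\emph{Step 2 (expanding $E$).} The same expansion applies to $E$, with $\tau$ in place of $\theta$. I will compute the eight combinations $\tau_1\pm\tau_2\pm\tau_3\pm\tau_4$ directly from \eqref{eq:phi}. From $\tau_1+\tau_2=\theta_1+\theta_2$, $\tau_3+\tau_4=\theta_1-\theta_2$, $\tau_1-\tau_2=\theta_3+\theta_4$ and $\tau_3-\tau_4=\theta_3-\theta_4$, the even-sign combinations are $2\theta_1,2\theta_2,2\theta_3,2\theta_4$. The odd-sign combinations are again $\pm2\varphi_i$; for example, $\tau_1+\tau_2+\tau_3-\tau_4=2\varphi_1$. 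Hence
\[
E=\tfrac18\Bigl[\textstyle\sum_i\cos2\pi\theta_i+\sum_i\cos2\pi\varphi_i\Bigr].
\]
The $\varphi$-terms cancel, and
\[
V-E=\tfrac18\Bigl[\textstyle\sum_j\cos2\pi\tau_j-\sum_i\cos2\pi\theta_i\Bigr].
\]

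\emph{Step 3 (expanding the sine product).} Use $\sin A\sin B=\frac12[\cos(A-B)-\cos(A+B)]$ twice and then $\cos A\cos B$. This gives
\[
\prod_{i=1}^4\sin\pi\varphi_i=\frac18\sum_{\epsilon_2,\epsilon_3,\epsilon_4}s(\epsilon)\cos\pi(\varphi_1+\epsilon_2\varphi_2+\epsilon_3\varphi_3+\epsilon_4\varphi_4),
\]
where $s(\epsilon)=\epsilon_2\epsilon_3\epsilon_4$. Equivalently, $s(\epsilon)=+1$ for an odd number of minus signs among $\epsilon_2,\epsilon_3,\epsilon_4$ (the pattern $\varphi_1-\varphi_2$ comes with $+$), up to checking the convention.

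Since $\theta\mapsto\varphi$ is an involution, the combinations $\varphi_1\pm\varphi_2\pm\varphi_3\pm\varphi_4$ are, up to sign, $2\theta_i$ and $2\tau_j$. For example, $\varphi_1+\varphi_2+\varphi_3+\varphi_4=\theta_1+\theta_2+\theta_3+\theta_4=2\tau_1$, and $\varphi_1+\varphi_2-\varphi_3-\varphi_4=2\theta_1$. It then remains to check that the terms with arguments $2\tau_j$ carry sign $+$ and those with arguments $2\theta_i$ carry sign $-$. The example above is consistent: $2\tau_1$ has no minus signs, so $s=+1$, while $2\theta_1$ has two, so $s=-1$.

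The main obstacle is only this bookkeeping in Step 3: listing all eight sign patterns of $\varphi$ and matching each to its $2\theta_i$ or $2\tau_j$, with the right sign. I would do this by a table from \eqref{eq:phi}. As a sanity check I would specialise to $\theta=0$: then $V=E=1$ and all $\varphi_i=0$, so both sides vanish, which the Step~3 formula reproduces because the signs sum to $4-4=0$.
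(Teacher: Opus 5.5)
Your route is sound and differs from the paper's, but Step~3 as written gets the details wrong, so the decisive comparison is not yet established.

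\textbf{The errors in Step~3.} The formula $s(\epsilon)=\epsilon_2\epsilon_3\epsilon_4$ is right: expand $\prod_i\bigl(e^{i\pi\varphi_i}-e^{-i\pi\varphi_i}\bigr)/(2i)$ and pair $\epsilon$ with $-\epsilon$. But it equals $+1$ for an \emph{even} number of minus signs, not an odd one as your ``Equivalently'' says. Also, $\varphi_1+\varphi_2-\varphi_3-\varphi_4=(\theta_1+\theta_2)-(\theta_3+\theta_4)=2\tau_2$, not $2\theta_1$. In fact $2\theta_1=\varphi_1+\varphi_2+\varphi_3-\varphi_4$, which has one minus sign. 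Your check gives $s=+1$ to zero minus signs and $s=-1$ to two, which no parity rule does. The sign of the $\theta_1$ term comes out right only because the two slips cancel.

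\textbf{The fix.} Use $\varphi_1\pm\varphi_2\in\{\theta_1+\theta_2,\ \theta_3-\theta_4\}$ and $\varphi_3\pm\varphi_4\in\{\theta_3+\theta_4,\ \theta_1-\theta_2\}$. The correct table is
\[
\begin{aligned}
\varphi_1+\varphi_2+\varphi_3+\varphi_4&=2\tau_1, &\qquad \varphi_1+\varphi_2+\varphi_3-\varphi_4&=2\theta_1,\\
\varphi_1+\varphi_2-\varphi_3-\varphi_4&=2\tau_2, & \varphi_1+\varphi_2-\varphi_3+\varphi_4&=2\theta_2,\\
\varphi_1-\varphi_2+\varphi_3-\varphi_4&=2\tau_3, & \varphi_1-\varphi_2+\varphi_3+\varphi_4&=2\theta_3,\\
\varphi_1-\varphi_2-\varphi_3+\varphi_4&=-2\tau_4, & \varphi_1-\varphi_2-\varphi_3-\varphi_4&=-2\theta_4.
\end{aligned}
\]
The right column is just the statement that $\theta\mapsto\varphi$ is an involution. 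The even patterns on the left give the $\tau_j$ with sign $+$, and the odd ones give the $\theta_i$ with sign $-$. Hence $8\prod_i\sin\pi\varphi_i=\sum_j\cos2\pi\tau_j-\sum_i\cos2\pi\theta_i$, which is $8(V-E)$ by your Step~2. With this correction your proof is complete.

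\textbf{Comparison with the paper.} The paper stops one level earlier. With $c_\pm=\cos\pi(\theta_1\pm\theta_2)$ and $d_\pm=\cos\pi(\theta_3\pm\theta_4)$, it writes $4V$, $4E$ and $4\prod_i\sin\pi\varphi_i$ as $(c_++c_-)(d_++d_-)$, $(c_++d_+)(c_-+d_-)$ and $(c_+-d_-)(d_+-c_-)$. It needs only the pair sums $\tau_1\pm\tau_2$, $\tau_3\pm\tau_4$, $\varphi_1\pm\varphi_2$, $\varphi_3\pm\varphi_4$, and the lemma becomes a one-line polynomial identity in $c_\pm,d_\pm$.

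Your full expansion needs more bookkeeping, which is where the slips crept in. In return it gives the formulas $8V=\sum_j\cos2\pi\tau_j+\sum_i\cos2\pi\varphi_i$ and $8E=\sum_i\cos2\pi\theta_i+\sum_i\cos2\pi\varphi_i$, which treat $\theta$, $\tau$ and $\varphi$ alike. Together with the table, they make it transparent that changing the sign of one $\varphi_i$ swaps $V$ and $E$, as used in Proposition~\ref{prop:B4}.
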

\begin{proof}
By the product-to-sum formulas, with $c_\pm=\cos\pi(\theta_1\pm\theta_2)$ and $d_\pm=\cos\pi(\theta_3\pm\theta_4)$, we have $4V=(c_++c_-)(d_++d_-)$, $4E=(c_++d_+)(c_-+d_-)$ and $4\prod_i\sin\pi\varphi_i=(c_+-d_-)(d_+-c_-)$. Hence
\[
4V-4E=(c_++c_-)(d_++d_-)-(c_++d_+)(c_-+d_-)=(c_+-d_-)(d_+-c_-)=4\prod_i\sin\pi\varphi_i. \qedhere
\]
\end{proof}

In these coordinates, the sixteen Gamma factors of $D$ form eight pairs $\G(z)\G(1-z)$: in the numerator the arguments are $\{1+\varphi_i,-\varphi_i\}$, $i=1,\dots,4$, and in the denominator they are $\{1+a,-a\}$, with $a=\frac12+\tau_1$, and $\{\frac12+\tau_j,\frac12-\tau_j\}$, $j=2,3,4$. Hence, by Euler's reflection formula,
\begin{equation}\label{eq:DH}
D\big|_H=-\frac{E}{\sin\pi\varphi_1\,\sin\pi\varphi_2\,\sin\pi\varphi_3\,\sin\pi\varphi_4}.
\end{equation}

\section{Parametric extensions of the series of Cullen and Zhao}\label{sec:theorems}

In this section we state the main theorems: four-parameter extensions of the series of Cullen and Zhao on the Gamma reflection hyperplane $H$, and a three-parameter extension of the upside-down series. We state the extended Zhao identity in the coordinates $\theta$, in which $\theta=0$ gives \eqref{eq:cullen} and \eqref{eq:zhao}, and the extended Cullen identity in the coordinates $\varphi$, in which its summand is symmetric under permutations and sign changes. The extended upside-down identity comes from another seed of Au (Appendix~\ref{app:tables}), and we state it in coordinates $\ell\in\mathbb C^4$ with $\ell_1+\dots+\ell_4=0$, in which $\ell=0$ gives \eqref{eq:upside} and its summand is symmetric under permutations.

Let $e_1,\dots,e_4$ be the elementary symmetric functions of $\varphi_1^2,\dots,\varphi_4^2$, $f_1,\dots,f_4$ those of $\theta_1,\dots,\theta_4$, and $g_2,g_3,g_4$ those of $\ell_1,\dots,\ell_4$. The polynomials $\PC(n;\varphi)\in\mathbb Q(n)[e_1,\dots,e_4]$, $\PZ(n;\theta)\in\mathbb Q(n)[f_1,\dots,f_4]$ and $\PU(n;\ell)\in\mathbb Q(n)[g_2,g_3,g_4]$ of the following theorems are given in \eqref{eq:PC-full}, \eqref{eq:PZ-full} and \eqref{eq:PU-full}. At the origin they are $\Pc$, $\Pz$ and $\Pu$. We write $\sq t=\sin t/t$, with $\sq 0=1$, and $\ip\varphi=\varepsilon_1\varphi_1+\dots+\varepsilon_4\varphi_4$ for $\varepsilon\in\{\pm1\}^4$.

\begin{theorem}\label{thm:CZ}
Let $\varphi$ and $\theta$ be related by \eqref{eq:phi}, with $\varphi_i\notin\mathbb Z\setminus\{0\}$.
\begin{enumerate}
\item[(i)] (\emph{The extended Cullen identity}.) With $\varepsilon$ running over the eight vectors $(1,\pm1,\pm1,\pm1)$,
\begin{equation}\label{eq:thm-cullen}
\sum_{n\ge0}\wC\,\PC(n;\varphi)\,
\frac{\prod_{k=0}^{n-1}\prod_{\varepsilon}\Bigl(1-\frac{\ip\varphi^2}{(2k+1)^2}\Bigr)}{\prod_{k=1}^{2n+1}\prod_{i=1}^4\Bigl(1-\frac{\varphi_i^2}{k^2}\Bigr)}
=\frac{1024}{\pi^4}\,\frac{V+E}{\prod_{i=1}^4\sq(\pi \varphi_i)}.
\end{equation}
\item[(ii)] (\emph{The extended Zhao identity}.)
\begin{multline}\label{eq:thm-zhao}
\sum_{n\ge0}\wZ\,\PZ(n;\theta)\\
\times\frac{\prod_{k=0}^{n-1}\Bigl[\prod_{i=1}^4\bigl(1+\frac{2\theta_i}{2k+1}\bigr)\prod_{j=2}^4\bigl(1-\frac{4\tau_j^2}{(2k+1)^2}\bigr)\Bigr]\ \prod_{k=0}^{2n-1}\bigl(1+\frac{\sigma}{2k+1}\bigr)}
{\prod_{k=1}^{n}\prod_{i=1}^4\bigl(1-\frac{\varphi_i}{k}\bigr)\ \prod_{k=1}^{2n+1}\prod_{i=1}^4\bigl(1+\frac{\varphi_i}{k}\bigr)}
=\frac{256}{\pi^4}\,\frac{V+2E}{\prod_{i=1}^4\sq(\pi \varphi_i)} .
\end{multline}
\end{enumerate}
Both series converge absolutely and locally uniformly.
\end{theorem}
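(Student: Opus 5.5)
Write $S(\theta)$ for the left-hand side of \eqref{eq:thm-cullen} (resp.\ \eqref{eq:thm-zhao}) and $R(\theta)$ for the right-hand side. I would follow Guillera's periodicity argument, as announced in the Method paragraph.

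\textbf{Step 1: the summand as a seed term.} I would first identify the summand of $S$ with the series of the WZ pair of Au's seed $f$ along $v_{\rC}$ (resp.\ $v_{\rZ}$), restricted to $H$ and normalised by dividing by $f(p;0)\,D(p)$. Concretely, the summand should be $F(p+n v;0)$-type terms of the WZ mate, written via Gamma quotients. On $H$ the Pochhammer ratios in $n$ collapse into the displayed finite products in $\varphi,\theta,\tau$. The rational factor $\PC$ (resp.\ $\PZ$) comes from the certificate along $v$; this is where the explicit polynomials \eqref{eq:PC-full}, \eqref{eq:PZ-full} enter.

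Absolute and locally uniform convergence is then routine. The finite products divided by $\binom{2n}{n}$-type factors grow at most polynomially in $n$, uniformly on compacts avoiding the poles $\varphi_i\in\mathbb Z\setminus\{0\}$. This is beaten by the geometric factors $1/4096^n$ and $(27/256)^n$, times the ratio $\sim$ const$\cdot 4^{-?n}$ of the Pochhammer symbols.

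By \eqref{eq:DH} and Lemma~\ref{lem:trig}, the target is $S(\theta)=f(p;0)D(p)$ after normalisation. The factors $\prod\sq(\pi\varphi_i)$ absorb the $\pi^4\prod\sin\pi\varphi_i/\prod\varphi_i$. So the claim is equivalent to showing that
\[
G(\theta):=S(\theta)\,\prod_{i=1}^4\sin\pi\varphi_i
\]
equals a fixed trigonometric polynomial built from $V$ and $E$.

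\textbf{Step 2: periodicity.} Next I would use a WZ mate in a parameter direction $M=0$ that preserves $H$, say shifting $\theta_1\mapsto\theta_1+1,\theta_2\mapsto\theta_2-1$, or $\theta_1\mapsto\theta_1+2$ with compensation, so that $\sigma$ changes consistently with $a$. Summing the WZ relation $F(p+e,n)-F(p,n)=G(p,n+1)-G(p,n)$ over $n\ge0$ telescopes. Provided $G(p,n)\to0$ as $n\to\infty$ and $G(p,0)$ vanishes (or is computed), this shows that $S$ divided by the trigonometric normalisation is invariant under these shifts. Combining several such shifts, $G(\theta)$ becomes periodic in each $\theta_i$ with a common period (1 or 2) along the lattice directions within $H$.

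\textbf{Step 3: growth and polynomiality.} In one complex variable, say $\theta_1$ with the others fixed generic, I would bound $S$ as $|\operatorname{Im}\theta_1|\to\infty$. The finite products grow at most like $e^{c|\operatorname{Im}\theta_1|}$ with explicit $c$, after using Stirling's formula on the Gamma-quotient form. Together with periodicity, this makes $G$ a trigonometric polynomial in $e^{\pi i\theta_1}$ of low degree.

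\textbf{Step 4: interpolation.} Such a polynomial is pinned down by finitely many values. At points where some $\varphi_i$ or $\tau_j$ is a suitable integer or half-integer, the series terminates: the factor $1-\ip\varphi^2/(2k+1)^2$ vanishes, or the prefactor $\sin\pi\varphi_i$ kills all but finitely many terms. There the identity reduces to the terminating Dougall ${}_7F_6$ sum, evaluated by $D$, so $G$ agrees with $\frac{1024}{\pi^4}(V+E)\prod\varphi_i$-scaled (resp.\ with $V+2E$) at enough points. The difference of the two sides is then a trigonometric polynomial of bounded degree with more zeros than its degree, hence zero. Analytic continuation extends the identity to all $\theta$ off the excluded set.

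The main obstacle is Step 3, together with the boundary term in Step 2. The Method paragraph warns that a boundary series does not vanish on $H$, so the telescoping in Step 2 must be done carefully; it may only give quasi-periodicity up to an explicit trigonometric correction. The degree bound in Step 3 must be sharp enough for the terminating points in Step 4 to suffice. I would first try the single shift $\theta_1\mapsto\theta_1+1$ and check numerically the degree of $G$ in $q=e^{2\pi i\theta_1}$ before doing the uniform estimates.
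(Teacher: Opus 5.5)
Your outline matches the paper's argument: a certificate makes a normalised sum periodic in $\beta=\theta_1$, you multiply by $\prod_i\sin\pi\varphi_i$, bound the growth, and use zeros at terminating points. But several specifics are wrong, and the step you leave open is where the proof actually happens.

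\emph{Periodicity.} The periodic object is $S_{\rC}=\sum_nT_{\rC}(n)/\prod_i\varphi_i$. Your $G=S\prod_i\sin\pi\varphi_i$ equals $1024(V+E)\prod_i\varphi_i$, which is not periodic. The period is $2$, from the lattice step $b\mapsto b+2$, $a\mapsto a+1$ (your ``$\theta_1\mapsto\theta_1+2$ with compensation''). The shift $\theta_1\mapsto\theta_1+1$ alone is not a lattice step on $H$, and the right-hand side is not $1$-periodic in $\theta_1$. There is no boundary correction here: $\rho$ vanishes at $n=0$ and the terms decay geometrically, so the periodicity is exact. The non-vanishing boundary series in the Method paragraph concerns Au's summation along $v$, which is why that route is not used.

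\emph{Termination.} The factor $\sin\pi\varphi_i$ truncates nothing: at $\varphi_i\in\mathbb Z\setminus\{0\}$ infinitely many terms have poles. The series terminate where a factor $(\frac12\mp\theta_1)_n$, $(\frac12\mp\tau_j)_n$ or, for Zhao, $(\frac12+\tau_1)_{2n}$ vanishes, and then only the $n=0$ term survives. Dougall's sum is not needed there. Since $V=0$ at $\theta_1=\pm\frac12$ and $E=0$ at $\tau_j=\frac12$, Lemma~\ref{lem:trig} reduces the identity to a polynomial check. Namely, $\PC(0;\varphi)+1024\prod_i\varphi_i(1-\varphi_i^2)$ must be divisible by $\prod_i(4\theta_i^2-1)$, and $\PC(0;\varphi)-1024\prod_i\varphi_i(1-\varphi_i^2)$ by $\prod_j(4\tau_j^2-1)$; the check for $\PZ(0;\theta)$ is similar.

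\emph{The zero count is tight.} With $L=2$ the trigonometric factors are $O(e^{2\pi|t|})$. So Proposition~\ref{prop:count} with $K=2$ needs the normalised sum to be $o(e^{\pi|t|})$ and five zeros distinct modulo $2$.
\begin{itemize}
\item Cullen has six: $\beta=\pm\frac12$ and $\tau_j=\frac12$ for $j=1,\dots,4$.
\item Zhao has exactly five: $\beta=-\frac12$ only, because its summand contains $(\frac12+\theta_1)_n$ but not $(\frac12-\theta_1)_n$; then $\tau_j=\frac12$ for $j=2,3,4$; and $\sigma=-1$.
\end{itemize}
A growth bound that loses a factor $e^{c|t|}$ with $c\ge\pi$ would force $K=3$ and seven zeros, which neither series has. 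Termwise Stirling estimates do not even give a bound uniform in $n$.

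\emph{The missing ingredient} is a bound on the normalised sum that is polynomial in $|t|$, uniformly in $n$. The paper gets it from Lemma~\ref{lem:pair}. Each numerator Pochhammer symbol containing $\tau_j$ is paired with a block of a denominator symbol $(1\pm\varphi_i)$ with the same imaginary part. Each symbol containing $\theta_1$ is paired with a block of half its imaginary part. This bounds $|T(n)|$ by a $\beta$-independent geometric majorant times $|\PC(n;\varphi)|$, resp.\ $|\PZ(n;\theta)|$, and gives $S_{\rC}=O(|t|^8)$ and $S_{\rZ}=O(|t|^5)$.

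You also need entireness before Proposition~\ref{prop:count} applies. For small real $\theta_2,\theta_3,\theta_4$ and $|\operatorname{Re}\beta|<1.1$, the only poles are at $\varphi_i=0$, where the sines cancel them. Periodicity then extends holomorphy from this strip to all of $\mathbb C$.
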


\begin{theorem}[The extended upside-down identity]\label{thm:upside}
Let $\ell\in\mathbb C^4$ with $\ell_1+\ell_2+\ell_3+\ell_4=0$ and $\ell_1+\ell_i\notin\frac12+\mathbb Z$ for $i=2,3,4$. Then
\begin{multline}\label{eq:thm-upside}
\sum_{n\ge1}\wU\,\PU(n;\ell)\,
\frac{\prod_{k=1}^{n-1}\prod_{i=1}^4\bigl(1-\frac{\ell_i}{k}\bigr)\bigl(1+\frac{2\ell_i}{k}\bigr)}{\prod_{k=0}^{n-1}\prod_{i=2}^4\bigl(1-\frac{4(\ell_1+\ell_i)^2}{(2k+1)^2}\bigr)}\\
=8\pi^4\,\frac{\prod_{k\ge1}\prod_{i=1}^4\bigl(1-\frac{\ell_i^2}{k^2}\bigr)}{\prod_{k\ge0}\prod_{i=2}^4\bigl(1-\frac{4(\ell_1+\ell_i)^2}{(2k+1)^2}\bigr)}
=8\pi^4\,\frac{\prod_{i=1}^4\sq(\pi \ell_i)}{\prod_{i=2}^4\cos\pi(\ell_1+\ell_i)} .
\end{multline}
The series converges absolutely and locally uniformly.
\end{theorem}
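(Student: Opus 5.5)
We follow the strategy the introduction announces for Theorem~\ref{thm:CZ}, namely Guillera's periodicity argument, now applied to Au's second seed (Appendix~\ref{app:tables}) in place of the ${}_7F_6$ seed. \emph{Step 1.} Write the left side of \eqref{eq:thm-upside} as $S(\ell)=\sum_{n\ge1}G(\ell;n)$. Here $G$ is the WZ companion, along the upside-down direction of that seed, restricted to its Gamma reflection hyperplane and parametrized by $\ell$ with $\sum\ell_i=0$. We check that at $\ell=0$ the summand reduces to the one in \eqref{eq:upside}. \emph{Step 2.} Absolute and locally uniform convergence: the ratio of consecutive summands tends to $\frac{4}{27}\cdot\frac{27}{4}=1$, so we need a finer estimate. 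After Stirling, the extra products behave like $n^{c(\ell)}$, with exponents that cancel because $\sum\ell_i=0$. Combined with the factor $n^{-7}$ and the cubic polynomial $\PU$, the summand is $O(n^{-7/2+\cdots})\cdot n^{3}$ times a weight of size $n^{-3/2}$ or so. We compute the exponent carefully and confirm that it is below $-1$, uniformly on compacts avoiding the excluded set $\ell_1+\ell_i\in\frac12+\mathbb Z$, where the denominators vanish.

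\emph{Step 3 (periodicity).} Move along a parameter direction of the seed that preserves the hyperplane, say $\ell\mapsto\ell+\delta$ with $\delta=(1,-1,0,0)$ or a similar integer vector with zero sum. The WZ pair for that direction gives a certificate $R$ with
\[G(\ell+\delta;n)-G(\ell;n)=F(\ell;n+1)-F(\ell;n).\]
Summing over $n$ gives $S(\ell+\delta)-S(\ell)=\lim_{N}F(\ell;N)-F(\ell;1)$. As the paper warns, the boundary term does not vanish here. It is a single hypergeometric term, and by the reflection formula its value is an explicit trigonometric/Gamma expression, namely the difference $T(\ell+\delta)-T(\ell)$ of the right-hand side
\[T(\ell)=8\pi^4\prod_i\sq(\pi\ell_i)\Big/\prod_{i\ge2}\cos\pi(\ell_1+\ell_i).\]
Hence $\Delta(\ell)=S(\ell)-T(\ell)$ is periodic in each such direction. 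We also verify that $T$ equals the infinite product, using $\sin\pi z/\pi z=\prod(1-z^2/k^2)$ and $\cos\pi z=\prod(1-4z^2/(2k+1)^2)$.

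\emph{Step 4 (rigidity).} Multiply $\Delta$ by $\prod_{i\ge2}\cos\pi(\ell_1+\ell_i)\cdot\prod_i\ell_i$ to clear poles. Stirling bounds on $S$ as $\operatorname{Im}\ell\to\pm\infty$ in a periodic strip show that the product grows at most like $e^{\pi c|\operatorname{Im}\ell|}$ with small $c$. So it is a trigonometric polynomial of low degree in the periodic variable. Finally, this polynomial vanishes at enough points: when some $\ell_i$ is a positive integer, the factor $\prod_k(1-\ell_i/k)$ makes the series terminate. The terminating sum equals $T$ there, which we get from Dougall/Gosper-type evaluation, or more simply by running the WZ recursion down to a trivial case. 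It also vanishes at the zeros of $\sin\pi\ell_i$. More vanishing points than the degree force $\Delta\equiv0$ in one variable; we repeat with the other directions, or use analytic continuation in the remaining variables.

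\emph{Main obstacle.} The hardest step is Step~4. It requires precise growth estimates of $S(\ell)$ in imaginary directions, uniform in the other parameters, to bound the degree of the trigonometric polynomial, together with identifying enough terminating points where the identity is verifiable. We first try a one-variable slice, $\ell=(x,-x,y,-y)$ or $\ell=(t,\ell_2,\ell_3,-t-\ell_2-\ell_3)$ with $t$ the moving variable, and establish the identity there. Analyticity in $\ell_2,\ell_3$ then extends it.
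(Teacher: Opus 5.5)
Your outline follows the paper's route: a certificate makes a normalised sum periodic, cosines clear the poles, a growth bound leaves a trigonometric polynomial, and termination points force it to vanish. But the zero count in Step~4 is one short, and that is the step that decides the proof. On the slice $\ell=(\ell_1,\ell_2,-z,z-\ell_1-\ell_2)$ with $u=\ell_1+\ell_2-z$, we have $\ell_3\ell_4=zu$. So your function is, up to the constant $\ell_1\ell_2\cos\pi(\ell_1+\ell_2)$, equal to $B(z)=\cos\pi(\ell_1-z)\cos\pi(\ell_2-z)\,zu\,(S-T)$. It is $1$-periodic, and the best a priori bound is $O(e^{2\pi|\operatorname{Im}z|})$. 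The $S$-part is bounded by this because $zu\,S=O(1)$ while the two moving cosines grow at this rate. The $T$-part is a constant multiple of $\sin\pi z\,\sin\pi u$, which has exactly this growth. So there is no ``small $c$'': a priori $B=b_{-1}q^{-1}+b_0+b_1q$ with $q=e^{2\pi iz}$, and you need three zeros distinct modulo $1$.

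The points you list, $\ell_i\in\mathbb Z$ (zeros of $\sin\pi\ell_i$, or termination through the factor $1-\ell_i/k$), give only the two classes $z\equiv0$ and $z\equiv\ell_1+\ell_2$. Moreover, at $\ell_3=m\in\mathbb Z_{>0}$ one has $T=0$, and the vanishing of the terminating sum is just the periodic image of the trivial zero of $zu\,S$ at $z=0$. It adds no information, and no Dougall-type evaluation is involved. The missing point comes from the other factor, $1+2\ell_i/k$. At $\ell_3=-\frac12$, i.e.\ $z=\frac12$, the series stops after $n=1$. Since $\ell_1+\dots+\ell_4=0$, one has $\PU(1;\ell)=32\prod_i(1-\ell_i)$, and a short computation gives $zu\,S=-8/(\ell_1\ell_2)$, which is also the value of $zu\,T$ there. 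For small real $\ell_1\ne\ell_2$ this point is distinct from $0$ and $\ell_1+\ell_2$ modulo $1$. These three points, with $L=K=1$ in Proposition~\ref{prop:count}, are exactly how the paper concludes.

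Two other steps are wrong as written, although they do not change the function you end with. In Step~2, the quotient $(1)_n^7/\bigl((\frac12)_n^5(\frac13)_n(\frac23)_n\bigr)$ has seven Pochhammer symbols above and seven below, so its ratio tends to $1$, not $\frac{27}4$. The terms therefore decay like $(\frac4{27})^n$, convergence is geometric, and no exponent count is needed.

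In Step~3, the boundary term is zero, not a closed form. With the normalisation $\psi_n=zu\,T_{\rU}(n)$, the certificate vanishes at $n=1$ and the terms decay geometrically, so $zu\,S$ is exactly periodic. Your version cannot hold: $F(\ell;1)$ is rational in $\ell$, whereas $T(\ell+\delta)-T(\ell)$ is a nonzero rational multiple of $T(\ell)$. Accordingly, $S-T$ itself is not periodic; only $zu\,(S-T)$ is, and your Step~4 function is the right object only because $\prod_i\ell_i$ contains the factor $\ell_3\ell_4=zu$. The paper's remark about a nonvanishing boundary \emph{series} concerns Au's WZ summation formula, and it is the reason the periodicity argument is used instead.
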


We prove Theorems~\ref{thm:CZ} and~\ref{thm:upside} in Section~\ref{sec:proofs}.

\section{Borwein--Bradley-type identities}\label{sec:lines}

We now restrict the identities \eqref{eq:thm-cullen}, \eqref{eq:thm-zhao} and \eqref{eq:thm-upside} to lines through the origin on which all their products are products in a single power $x^r$ of one variable $x$, as in the Borwein--Bradley identity \eqref{eq:bb}. We find these lines from the symmetries of the summands.

\begin{proposition}\label{prop:B4}
The summand and the right-hand side of \eqref{eq:thm-cullen} are unchanged under permuting $\varphi_1,\dots,\varphi_4$ and changing their signs.\footnote{We remark that these transformations form the Weyl group $W(B_4)$, of order $384$.}
\end{proposition}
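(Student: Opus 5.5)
The plan is to check invariance separately for the three ingredients of \eqref{eq:thm-cullen}: the polynomial $\PC(n;\varphi)$, the product factor, and the right-hand side. It suffices to verify invariance under the generators of $W(B_4)$, namely transpositions of the $\varphi_i$ and the single sign change $\varphi_4\mapsto-\varphi_4$.

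\emph{Polynomial and denominator.} By definition $\PC(n;\varphi)\in\mathbb Q(n)[e_1,\dots,e_4]$, where the $e_r$ are the elementary symmetric functions of $\varphi_1^2,\dots,\varphi_4^2$. These are invariant under permutations and sign changes, so nothing remains to check. The denominator $\prod_{k}\prod_i(1-\varphi_i^2/k^2)$ depends only on the multiset $\{\varphi_i^2\}$, so it is invariant as well. For the numerator, each factor depends on $\ip\varphi^2$, and $\varepsilon$ runs over the eight vectors $(1,\pm1,\pm1,\pm1)$. Since $\ip\varphi^2=\langle-\varepsilon,\varphi\rangle^2$, the product over these eight vectors equals the product over all sixteen $\varepsilon\in\{\pm1\}^4$ taken modulo $\pm$; that is, it is the square root of the product over all $16$, or equivalently a product over the set $\{\ip\varphi^2:\varepsilon\in\{\pm1\}^4\}$ counted once per $\pm$-class. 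A permutation of the $\varphi_i$, or a sign change of any $\varphi_i$, permutes $\{\pm1\}^4$, commutes with $\varepsilon\mapsto-\varepsilon$, and hence permutes the eight classes. So the numerator is invariant.

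\emph{Right-hand side.} The factors $\sq(\pi\varphi_i)$ are even, and their product is symmetric, so it is $W(B_4)$-invariant. The remaining point, and the only real obstacle, is showing that $V+E$, defined in terms of $\theta$ and $\tau$, is invariant in $\varphi$. We express everything in $\varphi$. The map $\theta\mapsto\varphi$ is an involution, so $\theta_{5-i}=\frac{\sigma'}{2}-\varphi_i$ with $\sigma'=\sum\varphi_i$. This means the $\theta_i$ are the half-sums $\frac12(\pm\varphi_1\pm\cdots)$ with exactly one minus sign. The $\tau_j$ are then half-sums of the $\varphi$'s as well: for instance $\tau_1=\frac12\sum\theta_i=\frac12\sum\varphi_i$, and the other $\tau_j$ come out as $\frac12(\varphi_a+\varphi_b-\varphi_c-\varphi_d)$ (check: $\tau_2=\frac12(\theta_1+\theta_2-\theta_3-\theta_4)=\frac12(\varphi_4+\varphi_3-\varphi_2-\varphi_1)$, using $\theta_1-\theta_4=\varphi_4-\varphi_1$ and similar relations). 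Consequently, up to signs inside the cosines (which are even), $V$ is the product of $\cos\frac\pi2\ip\varphi$ over the $\pm$-classes of $\varepsilon$ with an odd number of minus signs, and $E$ is the same product over the classes with an even number.

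We then apply the generators. A permutation of the $\varphi_i$ preserves the parity of the number of minus signs, so it fixes $V$ and $E$ individually. A single sign change $\varphi_4\mapsto-\varphi_4$ swaps the two parity classes, so it exchanges $V$ and $E$ and fixes $V+E$. One should double-check that it does not instead send $V\mapsto V$ and $E\mapsto E$ up to sign, which Lemma~\ref{lem:trig} confirms is consistent: $V-E=\prod\sin\pi\varphi_i$ is odd in $\varphi_4$, so $V-E$ changes sign while $V+E$ is preserved. In fact Lemma~\ref{lem:trig} gives this step directly: $V+E$ is symmetric because each of $V$ and $E$ is symmetric, and under a sign flip, $V\leftrightarrow E$ is exactly the statement that $V-E$ is odd and $V+E$ is even. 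Combining these observations proves the proposition.
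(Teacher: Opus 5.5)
Your proof is correct and follows the paper's argument. The products depend only on the multisets $\{\ip\varphi^2\}$ and $\{\varphi_i^2\}$, $\PC$ is a polynomial in the $e_j$, and a sign change swaps $\{\pm\theta_i\}$ with $\{\pm\tau_j\}$, hence swaps $V$ and $E$; your parity-class description also makes explicit the permutation invariance of $V$ and $E$, which the paper leaves implicit.

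Two small corrections. Your closing remark overclaims: Lemma~\ref{lem:trig} alone does not show that $V+E$ is even, since oddness of $V-E$ in $\varphi_4$ does not imply it, so the parity-class argument is what actually does the work. Also, $\tau_2=\frac12(\varphi_1+\varphi_2-\varphi_3-\varphi_4)$, the negative of what you wrote, which is harmless because cosine is even.
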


\begin{proof}
The products depend only on the multisets $\{\ip\varphi^2\}$ and $\{\varphi_i^2\}$, and $\PC$ is a polynomial in $e_1,\dots,e_4$. A sign change of one $\varphi_i$ exchanges the sets $\{\pm\theta_i\}$ and $\{\pm\tau_j\}$, hence $V$ and $E$, and leaves $V+E$ and $\prod_i\sq(\pi \varphi_i)$ unchanged.
\end{proof}

In the same way, the Zhao summand is symmetric in $\theta$, since a permutation of $\theta$ permutes the numbers $\varphi_i=\sigma/2-\theta_{5-i}$ and, up to sign, $\tau_2,\tau_3,\tau_4$. The upside-down summand is symmetric in $\ell$: since $\sum\ell_i=0$, the numbers $\ell_1+\ell_i$ are, up to sign, the three sums $\ell_i+\ell_j$ with $2\le i<j$, and the denominator is even in each of them. Hence every factor of each summand is a polynomial in the $e_j$, the $f_j$ or the $g_j$.

\begin{lemma}\label{lem:collapse}
If exactly one of $e_1 , \dots, e_4$, say $e_j$, is nonzero at $v\in\mathbb C^4$, then on the line $\varphi=x\cdot v$ every factor of the Cullen summand is a polynomial in $x^{2j}$. The same holds for $f_j$ and $g_j$, with $x^j$ in place of $x^{2j}$.
\end{lemma}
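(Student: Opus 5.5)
The argument is a homogeneity count on the symmetric-function coordinates. By the remark just before the lemma, every factor of the Cullen summand is a polynomial in $e_1,\dots,e_4$ (with coefficients in $\mathbb Q(n)$ or $\mathbb Q(k)$). This covers three kinds of factor: $\PC(n;\varphi)$ by definition; each denominator factor $\prod_i(1-\varphi_i^2/k^2)$, which equals $\sum_r (-1)^r e_r/k^{2r}$; and each numerator factor $\prod_\varepsilon\bigl(1-\ip\varphi^2/(2k+1)^2\bigr)$, which is invariant under the symmetries of Proposition~\ref{prop:B4}. It therefore suffices to see how each $e_r$ behaves on the line $\varphi=x\cdot v$.

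\textbf{Step 1: the $e_r$ on the line.} The function $e_r$ is homogeneous of degree $2r$ in $\varphi$, so $e_r(xv)=x^{2r}e_r(v)$. By hypothesis this vanishes for $r\ne j$, and equals $e_j(v)\,x^{2j}$ for $r=j$. Hence any polynomial $Q(e_1,\dots,e_4)$ restricts on the line to $Q(0,\dots,0,e_j(v)x^{2j},0,\dots)$. This is a polynomial in $x^{2j}$, with coefficients rational in $n$ or $k$.

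\textbf{Step 2: the numerator products.} One must check that these really lie in $\mathbb Q(k)[e_1,\dots,e_4]$. The expression $\prod_\varepsilon\bigl(1-\ip\varphi^2/(2k+1)^2\bigr)$ is a polynomial in $\varphi$. It is invariant under permutations of the $\varphi_i$ and under sign changes: a sign change of $\varphi_i$ permutes the set $\{\pm\ip\varphi\}$ over all sixteen $\varepsilon$, and the eight with $\varepsilon_1=1$ represent these up to sign. By the fundamental theorem on symmetric polynomials applied to $\varphi_i^2$, every $W(B_4)$-invariant polynomial is a polynomial in $e_1,\dots,e_4$. The same argument handles $\PC$ if one prefers not to appeal to its explicit form \eqref{eq:PC-full}. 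Granted this, Step 1 finishes the Cullen case.

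\textbf{Step 3: the Zhao and upside-down cases.} Here the factors are polynomials in $f_1,\dots,f_4$ (respectively $g_2,g_3,g_4$), again by the remark preceding the lemma. The point to verify is that each factor is symmetric in $\theta$ (respectively $\ell$). For Zhao, a permutation of $\theta$ permutes the $\varphi_i$ and $\tau_2,\tau_3,\tau_4$ up to sign, and the $\tau_j$ enter only through $\tau_j^2$; so the products over $\theta_i$, over $\tau_j^2$, over $\sigma$ and over the $\varphi_i$ are each symmetric. For the upside-down series, the argument is the one with $\ell_1+\ell_i$ given before the lemma. Symmetric polynomials in $\theta$ lie in $\mathbb Q[f_1,\dots,f_4]$. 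Since $f_r$ has degree $r$, we get $f_r(xv)=x^rf_r(v)$, and likewise for $g_r$. The same substitution as in Step 1 then gives polynomials in $x^j$.

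\textbf{Main obstacle.} The only substantive point is the invariance used in Step 2 and Step 3. In particular, the Zhao products involve unsquared factors such as $1+2\theta_i/(2k+1)$ and $1\pm\varphi_i/k$. These are symmetric only after taking the product over $i$, so the check must be made on the full product rather than factor by factor. Beyond that, the proof is pure homogeneity.
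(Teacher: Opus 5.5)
Your proof is correct and is the paper's argument: $e_m(xv)=x^{2m}e_m(v)$ vanishes for $m\ne j$, and $f_m$, $g_m$ have degree $m$, applied to factors already known to be polynomials in these symmetric functions. Your Steps~2 and~3 only re-verify the symmetry remark that the paper states before the lemma (Proposition~\ref{prop:B4} and the paragraph after it), which the paper's proof takes as given.
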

\begin{proof}
$e_m$ is homogeneous of degree $2m$, so $e_m(xv)=e_m(v)x^{2m}$, which vanishes for $m\ne j$. The polynomials $f_m$ and $g_m$ have degree $m$.
\end{proof}

Such points come from roots of unity, since $\prod_{\zeta^r=1}(t-\zeta)=t^r-1$. For example, on the \emph{sextic line} $\varphi=x\cdot(0,1,\omega,\omega^2)$, with $\omega=e^{2\pi i/3}$, the symmetric polynomial $e_3=x^6$ is the only non-vanishing one. Other such lines appear in Corollary~\ref{cor:lines}. On a line with products in $x^r$ we write $\PC^{[r]}(n;x)$ and $\PZ^{[r]}(n;x)$ for the restrictions of $\PC$ and $\PZ$. They come from the terms in powers of the single surviving $e_j$ or $f_j$, and by \eqref{eq:PC-full} and \eqref{eq:PZ-full} the linear ones are
\begin{align*}
\PC(n;\varphi)&=\Pc(n)-\frac{2e_1\,P_{\rC,1}(n)}{(2n+1)^2}+\frac{4e_2\,P_{\rC,2}(n)}{(2n+1)^4}-\frac{64e_3\,P_{\rC,3}(n)}{(2n+1)^6}+\frac{64e_4\,P_{\rC,4}(n)}{(2n+1)^8}+\cdots,\\
\PZ(n;\theta)&=\Pz(n)+\frac{2f_1P_{\rZ,1}(n)}{2n+1}+\frac{8f_2P_{\rZ,2}(n)}{(2n+1)^2}-\frac{16f_3P_{\rZ,3}(n)}{(2n+1)^3}-\frac{16f_4P_{\rZ,4}(n)}{(2n+1)^4}+\cdots,
\end{align*}
with
\begin{equation}\label{eq:PCi}
\begin{alignedat}{2}
P_{\rC,1}&=29088n^4+19416n^3+5972n^2+834n+45, &\qquad P_{\rZ,1}&=1780n^4+1240n^3+441n^2+81n+6,\\
P_{\rC,2}&=7328n^4-616n^3-2188n^2-622n-43, & P_{\rZ,2}&=1456n^4+1604n^3+692n^2+129n+9,\\
P_{\rC,3}&=1344n^4+1480n^3+812n^2+214n+21, & P_{\rZ,3}&=n(512n^3+444n^2+132n+11),\\
P_{\rC,4}&=1728n^4+2152n^3+1244n^2+334n+33, & P_{\rZ,4}&=1216n^4+2068n^3+1164n^2+265n+20.
\end{alignedat}
\end{equation}

\begin{corollary}\label{cor:lines}
Let $\omega=e^{2\pi i/3}$ and $\nu=e^{i\pi/4}$. For $x\in\mathbb C$ off the poles of the right-hand sides, the following identities hold. On the sextic line $\varphi=x\cdot(0,1,\omega,\omega^2)$ the extended Cullen identity becomes \eqref{eq:sextic}. On the octic line $\varphi=x\cdot(1,\nu,\nu^2,\nu^3)$ it becomes
\begin{multline}\label{eq:octic}
\sum_{n\ge0}\wC\Bigl[\Pc(n)-\frac{64x^8P_{\rC,4}(n)}{(2n+1)^8}\Bigr]
\frac{\prod_{k=0}^{n-1}\bigl(1+\frac{2176\,x^8}{(2k+1)^8}+\frac{4096\,x^{16}}{(2k+1)^{16}}\bigr)}{\prod_{k=1}^{2n+1}\bigl(1-\frac{x^8}{k^8}\bigr)}\\
=\frac{1024}{\pi^4}\,\frac{\prod_{k\ge0}\bigl(1+\frac{48i\,x^4}{(2k+1)^4}-\frac{64x^8}{(2k+1)^8}\bigr)+\prod_{k\ge0}\bigl(1-\frac{48i\,x^4}{(2k+1)^4}-\frac{64x^8}{(2k+1)^8}\bigr)}{\prod_{k\ge1}\bigl(1-\frac{x^8}{k^8}\bigr)}\\
=\frac{2048\,x^4\,\bigl(h(x_+)h(x_-)+h(x'_+)h(x'_-)\bigr)}{\sin\pi x\,\sinh\pi x\,\bigl(\cosh\sqrt2\pi x-\cos\sqrt2\pi x\bigr)},
\end{multline}
where $x_\pm=(1\pm\nu)x$, $x'_\pm=(1\pm\nu^3)x$ and $h(t)=\frac12(\cos\pi t+\cosh\pi t)=\prod_{k\ge0}\bigl(1+\frac{4t^4}{(2k+1)^4}\bigr)$. On the cubic lines $\theta=x\cdot(0,1,\omega,\omega^2)$ of the extended Zhao identity and $\ell=x\cdot(1,\omega^2,0,\omega)$ of the extended upside-down identity,
\begin{multline}\label{eq:zhao-cubic}
\sum_{n\ge0}\wZ\,\PZ^{[3]}(n;x)\,\frac{\prod_{k=0}^{n-1}\bigl(1+\frac{8x^3}{(2k+1)^3}\bigr)\bigl(1-\frac{64x^6}{(2k+1)^6}\bigr)}{\prod_{k=1}^{n}\bigl(1+\frac{x^3}{k^3}\bigr)\prod_{k=1}^{2n+1}\bigl(1-\frac{x^3}{k^3}\bigr)}\\
=\frac{768}{\pi^4}\,\frac{\prod_{k\ge0}\bigl(1-\frac{64x^6}{(2k+1)^6}\bigr)}{\prod_{k\ge1}\bigl(1-\frac{x^6}{k^6}\bigr)}
=\frac{768}{\pi}\,x^3\cot\pi x\,\cot\pi\omega x\,\cot\pi\omega^2x,
\end{multline}
where $\PZ^{[3]}(n;x)=\Pz(n)-\frac{16x^3P_{\rZ,3}(n)}{(2n+1)^3}-\frac{64x^6(92n^2+61n+9)}{(2n+1)^4}$, and
\begin{multline}\label{eq:upside-cubic}
\sum_{n\ge1}\wU\Bigl[\Pu(n)-\frac{8x^3(5n-1)}{2n-1}\Bigr]\frac{\prod_{k=1}^{n-1}\bigl(1-\frac{x^3}{k^3}\bigr)\bigl(1+\frac{8x^3}{k^3}\bigr)}{\prod_{k=0}^{n-1}\bigl(1-\frac{64x^6}{(2k+1)^6}\bigr)}\\
=8\pi^4\,\frac{\prod_{k\ge1}\bigl(1-\frac{x^6}{k^6}\bigr)}{\prod_{k\ge0}\bigl(1-\frac{64x^6}{(2k+1)^6}\bigr)}
=\frac{8\pi}{x^3}\,\tan\pi x\,\tan\pi\omega x\,\tan\pi\omega^2 x .
\end{multline}
On the quartic lines $\varphi=x\cdot(1,i,0,0)$, $\ell=x\cdot(1,-i,i,-1)$ and $\theta=x\cdot(1,i,-1,-i)$ of the three identities,
\begin{equation}\label{eq:quartic}
\sum_{n\ge0}\wC\,\PC^{[4]}(n;x)\,\frac{\prod_{k=0}^{n-1}\bigl(1+\frac{4x^4}{(2k+1)^4}\bigr)^4}{\prod_{k=1}^{2n+1}\bigl(1-\frac{x^4}{k^4}\bigr)}
=\frac{512\,x^2(\cos\pi x+\cosh\pi x)^2}{\pi^2\sin\pi x\,\sinh\pi x},
\end{equation}
\begingroup\setlength{\multlinegap}{0pt}
\begin{multline}\label{eq:upside-quartic}
\sum_{n\ge1}\wU\Bigl[\Pu(n)-\frac{32x^4}{n}\Bigr]\frac{\prod_{k=1}^{n-1}\bigl(1-\frac{x^4}{k^4}\bigr)\bigl(1-\frac{16x^4}{k^4}\bigr)}{\prod_{k=0}^{n-1}\bigl(1+\frac{64x^4}{(2k+1)^4}\bigr)}\
=\frac{16\sin^2\pi x\,\sinh^2\pi x}{x^4(\cosh2\pi x+\cos2\pi x)},
\end{multline}
\endgroup
\begin{multline}\label{eq:zhao-quartic}
\sum_{n\ge0}\wZ\,\PZ^{[4]}(n;x)\,\frac{\prod_{k=0}^{n-1}\bigl(1-\frac{16x^4}{(2k+1)^4}\bigr)\bigl(1+\frac{64x^4}{(2k+1)^4}\bigr)}{\prod_{k=1}^{n}\bigl(1-\frac{x^4}{k^4}\bigr)\prod_{k=1}^{2n+1}\bigl(1-\frac{x^4}{k^4}\bigr)}\\
=\frac{384\,x^4(\cos2\pi x+\cosh2\pi x)}{\sin^2\pi x\,\sinh^2\pi x}-256x^4,
\end{multline}
where
\begin{align*}
\PC^{[4]}(n;x)&=\Pc(n)-\frac{4x^4P_{\rC,2}(n)}{(2n+1)^4}+\frac{16(28n+23)x^8}{(2n+1)^5}+\frac{64(4n+1)x^{12}}{(2n+1)^9},\\
\PZ^{[4]}(n;x)&=\Pz(n)+\frac{16x^4P_{\rZ,4}(n)+256x^8}{(2n+1)^4}.
\end{align*}
\end{corollary}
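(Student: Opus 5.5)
The plan is to specialise Theorems~\ref{thm:CZ} and~\ref{thm:upside} to each line and simplify each side separately. I will use Lemma~\ref{lem:collapse} for the summands and elementary trigonometry (Lemma~\ref{lem:trig} and the product formulas for $\sin$, $\cos$) for the right-hand sides.

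For each line I first check which elementary symmetric function survives.
\begin{itemize}
\item On $\varphi=x(0,1,\omega,\omega^2)$ the squares are $0,x^2,\omega^2x^2,\omega x^2$. Their only nonzero elementary symmetric function is $e_3=x^6$.
\item On $\varphi=x(1,\nu,\nu^2,\nu^3)$ the squares are $x^2(1,i,-1,-i)$, so only $e_4=-x^8$ survives.
\item On the quartic Cullen line $\varphi=x(1,i,0,0)$ the squares are $x^2,-x^2,0,0$, so only $e_2=-x^4$ survives.
\item For the Zhao and upside-down lines I do the same with $f_j$ and $g_j$. On $\theta=x(0,1,\omega,\omega^2)$ only $f_3=x^3$ survives. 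On $\theta=x(1,i,-1,-i)$ only $f_4=-x^4$ survives. The upside-down lines lie in $\sum\ell_i=0$, with $g_3$ and $g_4$ respectively the only survivors.
\end{itemize}
By Lemma~\ref{lem:collapse} every factor of the summand is then a polynomial in $x^r$. The restricted polynomials $\PC^{[r]}$, $\PZ^{[r]}$, $\PU^{[r]}$ are read off from \eqref{eq:PC-full}, \eqref{eq:PZ-full}, \eqref{eq:PU-full} by keeping only the powers of the surviving $e_j$, $f_j$ or $g_j$.

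To make the finite products explicit, I compute the multisets $\{\ip\varphi^2\}$, or $\theta_i$, $\tau_j$, $\varphi_i$ in the Zhao case, on each line. I then group the factors using $\prod_{\zeta^r=1}(1-\zeta t)=1-t^r$.
\begin{itemize}
\item On the sextic line the eight values $\ip\varphi$ are $\pm(1+\omega+\omega^2)x=0$ (twice, counted with the $\varphi_1=0$ freedom) together with $\pm x(1\pm\omega\mp\omega^2)$-type values. Their squares give two copies of $\{4x^2\zeta:\zeta^3=1\}$, so the product over $\varepsilon$ is $(1-64x^6/(2k+1)^6)^2$.
\item The octic and quartic lines are handled the same way. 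On the octic line the eight values $\ip\varphi$ pair into the factor $1+2176x^8/(2k+1)^8+4096x^{16}/(2k+1)^{16}$, which I will verify by multiplying the two quartic factors in $x^4$.
\end{itemize}

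For the right-hand sides I evaluate $V$ and $E$ on the line. Here $\theta$ is recovered from $\varphi$ by the involution \eqref{eq:phi}, and $\tau$ is then computed from $\theta$.
\begin{itemize}
\item On the sextic line I expect one of $V,E$ to become the cosine product over a cube-root orbit and the other to coincide with it, giving $V+E=2\prod\cos$.
\item The Weierstrass products $\cos\pi z=\prod_{k\ge0}\bigl(1-4z^2/(2k+1)^2\bigr)$ and $\sq\pi z=\prod_{k\ge1}(1-z^2/k^2)$ then turn $(V+E)/\prod\sq(\pi\varphi_i)$ into the middle expressions of \eqref{eq:sextic}, \eqref{eq:octic} and so on.
\item The closed trigonometric forms follow from $\prod_{\zeta^3=1}\sin\pi\zeta x$-type identities such as $\prod_\zeta\sin\pi\zeta x/\prod_\zeta\cos\pi\zeta x=\prod\tan$. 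In the quartic cases I use $\cos\pi x\cos\pi i x=\tfrac12(\cos\pi(1+i)x+\cos\pi(1-i)x)$ and the definition of $h$.
\item When a $\varphi_i$ vanishes, the limit $\sq 0=1$ causes no difficulty.
\end{itemize}

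I expect the main obstacle to be bookkeeping rather than ideas. The hardest points are the octic line, where $V$ and $E$ are genuinely different and must be matched to $h(x_+)h(x_-)$ and $h(x'_+)h(x'_-)$, and the Zhao quartic, whose right side has the extra $-256x^4$. That term must come from $V+2E$ not being a pure product. For the octic case I would first compute $\theta=x\,M(1,\nu,\nu^2,\nu^3)$ explicitly and verify $\tau$ and $\theta$ numerically at a sample $x$. Then I would write each cosine product as a product of two $h$'s via $\cos a\cos b$ identities.

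For the Zhao quartic I would use Lemma~\ref{lem:trig} to write $V+2E=3E+\prod\sin\pi\varphi_i$. The second term divided by $\prod\sq(\pi\varphi_i)$ is $\pi^4\prod\varphi_i=\pi^4\cdot(-x^4)$ up to sign, which produces the constant $-256x^4$ after the prefactor $256/\pi^4$.

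Finally, the validity region is the set of $x$ with $\varphi_i\notin\mathbb Z\setminus\{0\}$, respectively $\ell_1+\ell_i\notin\frac12+\mathbb Z$. These are exactly the poles of the right-hand sides, and local uniform convergence transfers from Theorems~\ref{thm:CZ} and~\ref{thm:upside}.
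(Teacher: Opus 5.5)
Your proposal is correct and is essentially the paper's proof. You restrict Theorems~\ref{thm:CZ} and~\ref{thm:upside} to each line and use Lemma~\ref{lem:collapse} with the single surviving $e_j$, $f_j$ or $g_j$ to read off the restricted polynomials; you collapse the finite products with $\prod_{\zeta^r=1}(1-\zeta t)=1-t^r$; and you evaluate $V$ and $E$ via \eqref{eq:phi}, Lemma~\ref{lem:trig} and the Weierstrass products. Lemma~\ref{lem:trig} gives $E=V$ whenever some $\varphi_i=0$, as on the sextic, Cullen-quartic and Zhao-cubic lines. Your treatment of the Zhao quartic through $V+2E=3E+\prod_i\sin\pi\varphi_i$ is right, and no ``up to sign'' is needed there: $\prod_i\varphi_i=-x^4$ exactly, so the extra term is exactly $-256x^4$.
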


\begin{proof}
Each identity is the restriction of \eqref{eq:thm-cullen}, \eqref{eq:thm-zhao} or \eqref{eq:thm-upside} to its line. On each line only one elementary symmetric function is nonzero:
\begin{itemize}
\item for the extended Cullen identity, $e_3=x^6$ on the sextic line, $e_4=-x^8$ on the octic line and $e_2=-x^4$ on the quartic line;
\item for the extended Zhao identity, $f_3=x^3$ on the cubic line and $f_4=-x^4$ on the quartic line;
\item for the extended upside-down identity, $g_3=x^3$ on the cubic line and $g_4=-x^4$ on the quartic line.
\end{itemize}
By Lemma~\ref{lem:collapse}, all products are then products in a single power of $x$. We give the details for the sextic line; the other lines are similar. Since $(1-t)(1-\omega t)(1-\omega^2t)=1-t^3$, the product of the factors $1-\varphi_i^2/k^2$ is $1-x^6/k^6$. Since $\varphi_1=0$, the eight numbers $\ip\varphi$ are $0$ (twice), $\pm2x$, $\pm2\omega x$ and $\pm2\omega^2x$, so the product over $\varepsilon$ is $\bigl(1-64x^6/(2k+1)^6\bigr)^2$. No monomial of Table~\ref{tab:PC} is a power of $e_3$, so $\PC(n;\varphi)=\Pc(n)-64x^6P_{\rC,3}(n)/(2n+1)^6$. On the right-hand side, $\theta=-x\cdot(\omega^2,\omega,1,0)$ by \eqref{eq:phi}, so $V=\cos\pi x\cos\pi\omega x\cos\pi\omega^2x$, and $E=V$ by Lemma~\ref{lem:trig}, since $\sin\pi\varphi_1=0$. Moreover $\prod_i\sq(\pi\varphi_i)=\prod_{k\ge1}(1-x^6/k^6)$ by the Weierstrass product $\sin\pi z=\pi z\prod_{k\ge1}(1-z^2/k^2)$, and the analogous product $\cos\pi z=\prod_{k\ge0}\bigl(1-4z^2/(2k+1)^2\bigr)$ gives the middle expression of \eqref{eq:sextic}.
\end{proof}

By Lemma~\ref{lem:collapse} the degree $r$ obtained in this way is at most $8$, since $e_j$ have degree $2j\le8$ in $x$ and $f_j$ and $g_j$ have degree $j\le4$. We do not know of Borwein--Bradley-type identities with products of degree $10$ or higher.

\section{Proof of ten conjectures of Sun}\label{sec:sun}

We now prove Sun's conjectures by comparing Taylor coefficients on the lines of Table~\ref{tab:taylor}.

\begin{theorem}\label{thm:sun}
The following ten identities, conjectured by Sun, are true:
\begin{align}
\sum_{n\ge0}\wC\Bigl[\Pc(n)\bigl(7H^{(2)}_{2n}-2H^{(2)}_n\bigr)+3624n^2+926n+69\Bigr]&=\frac{2048}{3\pi^2},\tag{C2}\\
\sum_{n\ge0}\wC\Bigl[\Pc(n)\bigl(17H^{(4)}_{2n}-H^{(4)}_n\bigr)+\frac{1796n+193}{2n+1}\Bigr]&=\frac{8704}{45},\tag{C4}\\
\sum_{n\ge0}\wC\Bigl[\Pc(n)\bigl(127H^{(6)}_{2n}-2H^{(6)}_n\bigr)+\frac{1323(4n+1)}{(2n+1)^3}\Bigr]&=\frac{126976\,\pi^2}{945},\tag{C6}\\
\sum_{n\ge0}\wC\Bigl[\Pc(n)\bigl(4354H^{(8)}_{2n}-17H^{(8)}_n\bigr)-\frac{4182(4n+1)}{(2n+1)^5}\Bigr]&=-\frac{197632\,\pi^4}{4725},\tag{C8}
\end{align}
\begin{align}
\sum_{n\ge0}\wZ\Bigl[\Pz(n)\bigl(H_{4n}+\tfrac12H_{2n}\bigr)-(484n^3+108n^2-9n-3)\Bigr]&=0,\tag{Z1}\\
\sum_{n\ge0}\wZ\Bigl[\Pz(n)\bigl(11H^{(2)}_{2n}-4H^{(2)}_n\bigr)+1780n^2+633n+63\Bigr]&=\frac{512}{\pi^2},\tag{Z2}\\
\sum_{n\ge0}\wZ\Bigl[\Pz(n)\bigl(9H^{(3)}_{2n}-2H^{(3)}_n\bigr)-\frac{916n^2+65n-9}{2n+1}\Bigr]&=0,\tag{Z3}\\
\sum_{n\ge0}\wZ\Bigl[\Pz(n)\bigl(49H^{(4)}_{2n}-2H^{(4)}_n\bigr)+\frac{5996n^2+3071n+329}{(2n+1)^2}\Bigr]&=\frac{4096}{15},\tag{Z4}
\end{align}
\begin{align}
\sum_{n\ge1}\wU\Bigl[\Pu(n)\bigl(8H^{(2)}_{2n-1}-7H^{(2)}_{n-1}\bigr)-44n+12\Bigr]&=\frac{16\,\pi^6}{3},\tag{U2}\\
\sum_{n\ge1}\wU\Bigl[\Pu(n)\bigl(64H^{(4)}_{2n-1}+13H^{(4)}_{n-1}\bigr)+\frac{32}{n}\Bigr]&=\frac{248\,\pi^8}{45}.\tag{U4}
\end{align}
Moreover
\begin{equation}
\sum_{n\ge1}\wU\Bigl[7\,\Pu(n)H^{(3)}_{n-1}-\frac{8(5n-1)}{2n-1}\Bigr]=0. \tag{U3}
\end{equation}
\end{theorem}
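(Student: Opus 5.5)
Each identity will be obtained by expanding one of the line identities of Corollary~\ref{cor:lines} in powers of $x$ and comparing the coefficient of the first nontrivial power $x^r$. The general mechanism is the same throughout. On a line where the products are products in $x^r$, every finite product $\prod_k(1+\alpha x^r/k^r)$ or $\prod_k(1+\alpha x^r/(2k+1)^r)$ contributes $\alpha x^r$ times a generalized harmonic number $H^{(r)}_N$ or an odd-index variant $\sum_{k<N}(2k+1)^{-r}=H^{(r)}_{2N}-2^{-r}H^{(r)}_N$ at first order. The polynomial $\PC^{[r]}$, $\PZ^{[r]}$ or its upside-down analogue contributes its $x^r$-coefficient, for instance $-64P_{\rC,3}(n)/(2n+1)^6$ on the sextic line. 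The right-hand side is expanded with the Weierstrass products, which give power series in $x^r$ whose coefficients are rational multiples of powers of $\pi$. Since the series converge locally uniformly (Theorems~\ref{thm:CZ} and~\ref{thm:upside}), term-by-term differentiation at $x=0$ is justified by Weierstrass' theorem.

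The identities fall into groups as follows.
\begin{itemize}
\item (C6) comes from \eqref{eq:sextic} at order $x^6$. Here $(1-64x^6/(2k+1)^6)^2$ over $k<n$ gives $-128\sum_{k<n}(2k+1)^{-6}$, and the denominator over $k\le 2n+1$ gives $+H^{(6)}_{2n+1}$. After writing $H^{(6)}_{2n+1}=H^{(6)}_{2n}+(2n+1)^{-6}$ and combining with the $P_{\rC,3}$ term, the residual rational part should reduce, by polynomial division, to a multiple of $(4n+1)/(2n+1)^3$.
\item (C8) comes from \eqref{eq:octic} at order $x^8$.
\item (C4) and (C2) come from \eqref{eq:quartic}. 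For (C4) I use the $x^4$ coefficient. For (C2), which is not in $x^4$ alone, I instead go back to the full identity \eqref{eq:thm-cullen} on a line where only $e_1$ survives at first order, for instance $\varphi=x(1,0,0,0)$, and take the $x^2$ coefficient. Any line will do for first order, since only linear terms matter; the same applies to the other low-order cases.
\item (Z1)--(Z4) use \eqref{eq:thm-zhao}. I take $\theta=x(1,0,0,0)$ for $f_1$ and $f_2$ (orders $x$ and $x^2$), the cubic line \eqref{eq:zhao-cubic} for (Z3), and \eqref{eq:zhao-quartic} for (Z4).
\item (U2), (U3) and (U4) come from \eqref{eq:thm-upside}: (U3) from \eqref{eq:upside-cubic}, (U4) from \eqref{eq:upside-quartic}, and (U2) from a line such as $\ell=x(1,-1,0,0)$ at order $x^2$.
\end{itemize}

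At first order the constant-term series must be used. Every coefficient identity has the form $\sum w_n[\Pc(n)S_n+R_n]=c$, and a multiple of the base series \eqref{eq:cullen}--\eqref{eq:upside} may appear, contributing a constant multiple of $1/\pi^4$ or $\pi^4$. I eliminate these terms using the base series themselves, which are the $x=0$ cases of the same identities. This also explains why the claimed right-hand sides look like products of powers of $\pi$. For example, for (C6) the right side $\tfrac{2048}{\pi}x^3\cot\pi x\cot\pi\omega x\cot\pi\omega^2x$ has $x^6$ coefficient $\tfrac{2048}{\pi^4}\cdot c_6\pi^6$. The rational $c_6$ comes from $\prod_{k\ge0}(1-64x^6/(2k+1)^6)/\prod_{k\ge1}(1-x^6/k^6)$, namely $c_6\pi^6=\zeta(6)-64\lambda(6)$, where $\lambda(6)=(1-2^{-6})\zeta(6)$. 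This gives $-62\zeta(6)=-62\pi^6/945$ times $2048/\pi^4$. Together with the normalisation that divides out the harmonic combination $127H_{2n}-2H_n$, this should match $126976\pi^2/945$ up to sign conventions.

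One point needs care. Identities such as (Z1) and (Z2), which involve $H_{4n}$ or $H^{(2)}$ with mixed arguments, arise from the products $\prod_{k<2n}(1+\sigma/(2k+1))$ and $\prod_{k\le n}$, $\prod_{k\le 2n+1}$. At order $x$ on $\theta=x(1,0,0,0)$ these produce combinations of $H_{4n}$, $H_{2n}$ and $H_n$, and I must check that they assemble exactly into the stated combination $H_{4n}+\frac12H_{2n}$ plus rational corrections. The remaining polynomial parts are fixed by the explicit $P_{\rZ,j}$ in \eqref{eq:PCi}. Where a stated combination mixes different lines, I would take suitable linear combinations of first-order coefficients from two lines, say $\theta=x(1,0,0,0)$ and $\theta=x(1,1,0,0)$, to isolate the required harmonic combination.

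\textbf{Main obstacle.} The main obstacle is bookkeeping rather than analysis: verifying that the rational remainders, which come from the $P_{\rC,j}$ and $P_{\rZ,j}$ terms, the boundary terms $(2n+1)^{-r}$, and multiples of $\Pc(n)$ absorbed via the base series, reduce exactly to Sun's stated polynomials. The cases needing the full polynomials \eqref{eq:PC-full} and \eqref{eq:PZ-full} beyond the displayed linear parts are the hardest. I expect they are not needed, because first-order coefficients involve only the linear terms.
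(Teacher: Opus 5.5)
\textbf{Overall approach.} Your strategy is the paper's: compare coefficients of $x^r$ in restrictions of Theorems~\ref{thm:CZ} and~\ref{thm:upside} to lines. For (C4), (C6), (C8), (Z1), (Z3), (Z4) and (U2)--(U4) you use the same sources as Table~\ref{tab:taylor}. For (C2) you use $\varphi=x(1,0,0,0)$ instead of the paper's $\theta=x(1,0,0,0)$. This is harmless: the only $W(B_4)$-invariant quadratic form is $e_1$, and $e_1=x^2$ on both lines, so the $x^2$ coefficients agree. Similarly, at order $x^2$ in the upside-down identity only $g_2$ occurs, because $g_1=0$.

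\textbf{The gap is (Z2).} On $\theta=x(1,0,0,0)$ you have $f_2\equiv0$. Your principle that ``any line will do'' fails here. The Zhao summand is only symmetric in $\theta$, so its quadratic part has two independent invariants, $f_1^2$ and $f_2$. On your line the $x^2$ coefficient is the $f_1^2$ coefficient. It contains the square of the first-order sum $H_{4n}+\frac12H_{2n}-\frac1{2n+1}$, its product with $2P_{\rZ,1}(n)/(2n+1)$, and the $f_1^2$ entry of Table~\ref{tab:PZ}. That is a different identity, not (Z2).

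\textbf{How to fix it.} You need a line with $f_1=0\ne f_2$, such as the paper's $\theta=x(0,1,-1,0)$. There $\sigma=0$ and $\varphi=\tau=(0,x,-x,0)$. The products become $\prod_{k<n}\bigl(1-\frac{4x^2}{(2k+1)^2}\bigr)^3$ divided by $\prod_{k\le n}\bigl(1-\frac{x^2}{k^2}\bigr)\prod_{k\le 2n+1}\bigl(1-\frac{x^2}{k^2}\bigr)$. Their $x^2$ coefficient is $-(11H^{(2)}_{2n}-4H^{(2)}_n)+(2n+1)^{-2}$. Since $f_2=-x^2$, the polynomial contributes $-8x^2P_{\rZ,2}(n)/(2n+1)^2$. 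The right side is $\frac{768}{\pi^2}x^2\cot^2\pi x=\frac{768}{\pi^4}-\frac{512}{\pi^2}x^2+\cdots$. With $(\Pz(n)-8P_{\rZ,2}(n))/(2n+1)^2=-(1780n^2+633n+63)$, this gives (Z2).

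\textbf{Your two-line fallback.} Combining the lines $(1,0,0,0)$ and $(1,1,0,0)$ would also work: the $x^2$ coefficient on the second minus four times that on the first isolates the $f_2$ part. But these are second-order coefficients. So, contrary to your last sentence, the quadratic entries of $\PZ$ do enter, even though they cancel.

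\textbf{Base-series elimination.} This step is never needed. The $x^0$ terms match automatically, and the $x^r$ coefficient on the right is directly Sun's value. For (Z1), (Z3) and (U3) that value is $0$, because the right side of \eqref{eq:thm-zhao} is even in $\theta$ and those of \eqref{eq:zhao-cubic} and \eqref{eq:upside-cubic} depend only on $x^6$.
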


(C2) and (Z1)--(Z4) are Conjectures~70(i) and~69 of \cite{sun2022harmonicsummands}, and (C4)--(C8) are Conjecture~5.47(i) of \cite{sun2023binomialII}. (U2) and (U4) are the last two identities of Conjecture~5.45 of \cite{sun2023binomialII}; the first two involve $\zeta(5)$ and are not obtained by our coefficient extractions, since the Taylor coefficients of the right-hand side of \eqref{eq:thm-upside} lie in $\pi^4\mathbb Q[\pi^2]$. (U3) is not in Sun's lists. As far as we know, none of the ten has been proved before. Hou and Sun \cite{housun2023taylor} prove other conjectures of Sun by comparing Taylor coefficients, and Patel \cite{patel2026balanced} proves conjectures of Sun on derivative sums for an upside-down analogue of Cullen's series.

\begin{proof}
Each identity follows by comparing the coefficients of $x^r$ in the source identity of Table~\ref{tab:taylor}; near $x=0$ the series converges uniformly, so on the left the coefficient can be taken termwise. In (Z1), (Z3) and (U3) the right-hand side has no term in $x^r$: that of \eqref{eq:thm-zhao} is even in $\theta$, and those of \eqref{eq:zhao-cubic} and \eqref{eq:upside-cubic} are functions of $x^6$.
\end{proof}

In the same way, higher Taylor coefficients of the Borwein--Bradley-type identities of Corollary~\ref{cor:lines}, and coefficients of other monomials in the parameters of Theorems~\ref{thm:CZ} and~\ref{thm:upside}, give many more identities of this kind, with products of harmonic numbers of several orders.

\begin{table}[t]
\caption{The sources of Theorem~\ref{thm:sun}: for each identity, the source identity in one variable $x$ and the power $x^r$ whose coefficients are compared. Four sources are Theorem~\ref{thm:CZ} or~\ref{thm:upside} restricted to the line shown; here $v\in\mathbb C^4$ is any vector with $v_1+\dots+v_4\ne0$.}\label{tab:taylor}
\centering\small\renewcommand{\arraystretch}{1.25}
\begin{tabular}{@{}llc@{}}\toprule
identity & source & coefficient\\\midrule
(C2) & \eqref{eq:thm-cullen} on $\theta=x\cdot(1,0,0,0)$ & $x^2$\\
(C4) & \eqref{eq:quartic} & $x^4$\\
(C6) & \eqref{eq:sextic} & $x^6$\\
(C8) & \eqref{eq:octic} & $x^8$\\
(Z1) & \eqref{eq:thm-zhao} on $\theta=x\,v$ & $x$\\
(Z2) & \eqref{eq:thm-zhao} on $\theta=x\cdot(0,1,-1,0)$ & $x^2$\\
(Z3) & \eqref{eq:zhao-cubic} & $x^3$\\
(Z4) & \eqref{eq:zhao-quartic} & $x^4$\\
(U2) & \eqref{eq:thm-upside} on $\ell=x\cdot(1,-1,0,0)$ & $x^2$\\
(U3) & \eqref{eq:upside-cubic} & $x^3$\\
(U4) & \eqref{eq:upside-quartic} & $x^4$\\\bottomrule
\end{tabular}
\end{table}

\section{Analogous identities for Dougall's \texorpdfstring{${}_5F_4$}{5F4}}\label{sec:5f4}

The classical identities of the introduction come from Dougall's ${}_5F_4$ sum in the same way as \eqref{eq:cullen}--\eqref{eq:upside} come from the ${}_7F_6$ sum, and here too the Gamma factors pair up by reflection at special points. Let
\[
\Phi(a;b,c,d,e)=\sum_{k\ge0}(a+2k)\,\frac{(b)_k(c)_k(d)_k(e)_k}{(1+a-b)_k(1+a-c)_k(1+a-d)_k(1+a-e)_k}.
\]
For $b=a$, Dougall's theorem gives
\[
\Phi(a;a,c,d,e)=\frac{\G(1+a-c)\G(1+a-d)\G(1+a-e)\G(1+a-c-d-e)}{\G(a)\G(1+a-c-d)\G(1+a-c-e)\G(1+a-d-e)} .
\]
Chu and Zhang \cite[Thm.~9]{chuzhang2014} rewrote $\Phi$ as a series of ratio $-\frac14$ with an explicit rational weight (see also \cite{guillera2018dougall}). At $(a;b,c,d,e)=(1+x;1+x,\frac12+x+y,\frac12+x-y,\frac12)$ the weight is a multiple of $5(2n+1)^2-4x^2-4y^2$ and Dougall's product is $(\frac12+x)(\frac14-y^2)\tan\pi x\tan\pi y/(xy)$, which gives
\begin{multline}\label{eq:oddcohen}
\sum_{n\ge0}\frac{(-1)^n\binom{2n}n}{16^n}\bigl(5(2n+1)^2-4x^2-4y^2\bigr)\frac{\prod_{k=0}^{n-1}\bigl((2k+1)^2-4(x+y)^2\bigr)\bigl((2k+1)^2-4(x-y)^2\bigr)}{\prod_{k=0}^{n}\bigl((2k+1)^2-4x^2\bigr)\bigl((2k+1)^2-4y^2\bigr)}\\
=\frac{\tan\pi x\,\tan\pi y}{2xy}.
\end{multline}
This is the half-integer analogue of Cohen's identity \eqref{eq:cohen}; at $y=0$ it reduces to an identity of Leshchiner \cite{leshchiner81}. At $y=ix$, it becomes the half-integer analogue of \eqref{eq:bb},
\begin{equation}\label{eq:oddbb}
\sum_{n\ge0}\frac{(-1)^n\binom{2n}n(2n+1)^2}{16^n}\,\frac{\prod_{k=0}^{n-1}\bigl((2k+1)^4+64x^4\bigr)}{\prod_{k=0}^{n}\bigl((2k+1)^4-16x^4\bigr)}=\frac{\tan\pi x\,\tanh\pi x}{10x^2}.
\end{equation}

With $\omega=e^{2\pi i/3}$, the points $(1+\frac x2;1+\frac x2,\frac12+\frac x2,\frac12-\frac{\omega x}2,\frac12-\frac{\omega^2x}2)$ and $(2+x;1+x,1+x,1-\omega x,1-\omega^2x)$ give
\begin{multline}\label{eq:oddcubic}
\sum_{n\ge0}\frac{(-1)^n\binom{2n}n}{16^n(2n+1)^2}\,\frac{5(2n+1)^3+x^3}{(2n+1)^3-x^3}\prod_{k=0}^{n-1}\Bigl(1+\frac{x^3}{(2k+1)^3}\Bigr)\\
=\frac{\pi^2}2\,\frac{\prod_{k\ge1}\bigl(1+\frac{x^3}{8k^3}\bigr)}{\prod_{k\ge0}\bigl(1-\frac{x^3}{(2k+1)^3}\bigr)}
=\frac{\sqrt\pi}2\prod_{\epsilon^3=1}\frac{\G(\frac12-\frac{\epsilon x}2)}{\G(1+\frac{\epsilon x}2)}
\end{multline}
and
\begin{equation}\label{eq:evencubic}
\sum_{n\ge1}\frac{(-1)^{n-1}}{n^3\binom{2n}n}\,\frac{5n^3+x^3}{n^3-x^3}\prod_{k=1}^{n-1}\Bigl(1+\frac{x^3}{k^3}\Bigr)
=\frac1{x^3}\Bigl[\prod_{k\ge1}\frac{k^3+x^3}{k^3-x^3}-1\Bigr]
=\frac1{x^3}\Bigl[\prod_{\epsilon^3=1}\frac{\G(1-\epsilon x)}{\G(1+\epsilon x)}-1\Bigr].
\end{equation}
At the second point $b=a-1$, and shifting the summation index by one gives
\[
\Phi(2+x;1+x,1+x,1-\omega x,1-\omega^2x)=\frac{(1-\omega x)(1-\omega^2x)}{x^4}\bigl(\Phi(x;x,x,-\omega x,-\omega^2x)-x\bigr),
\]
to which Dougall's theorem applies.

As in Section~\ref{sec:sun}, the coefficients of $x^3$ in \eqref{eq:oddcubic} and of $16x^4$ in \eqref{eq:oddbb} are
\begin{align}
\sum_{n\ge0}\frac{(-1)^n\binom{2n}n}{16^n(2n+1)^2}\Bigl(5\sum_{k=0}^{n}\frac1{(2k+1)^3}+\frac1{(2n+1)^3}\Bigr)&=\frac{\pi^2}2\,\zeta(3),\label{eq:sun55}\\
\sum_{n\ge0}\frac{(-1)^n\binom{2n}n}{16^n(2n+1)^2}\Bigl(5\sum_{k=0}^{n-1}\frac1{(2k+1)^4}+\frac1{(2n+1)^4}\Bigr)&=\frac{7\pi^6}{7200},\label{eq:sun59}
\end{align}
which Sun conjectured \cite[(5.5), (5.9)]{sun2010lfunctions} and Au proved \cite[Example~IV]{au2025mzv}. This gives a short new proof of both: the right-hand sides of \eqref{eq:oddcubic} and \eqref{eq:oddbb} are explicit products, so their Taylor coefficients are immediate, whereas in \cite{au2025mzv} the coefficients on the other side are coloured multiple zeta values of level $2$, which then have to be evaluated. Table~\ref{tab:5f4} lists these and the next coefficients.

\begin{table}[t]
\caption{Taylor coefficients of \eqref{eq:oddcubic}, \eqref{eq:evencubic} (in $x^3$) and \eqref{eq:oddbb} (in $t=16x^4$). Each is $\sum_n w_nB_n$, with $w_n$ the weight $\frac{(-1)^n\binom{2n}n}{16^n(2n+1)^2}$ ($n\ge0$) of the half-integer identities or $\frac{(-1)^{n-1}}{n^3\binom{2n}n}$ ($n\ge1$) of the integer one, $u=(2n+1)^{-3}$, $(2n+1)^{-4}$ or $n^{-3}$, and $e_r$ the elementary symmetric functions of the numbers $(2k+1)^{-3}$, $(2k+1)^{-4}$ ($0\le k<n$) or $k^{-3}$ ($1\le k<n$).}\label{tab:5f4}
\small\renewcommand{\arraystretch}{1.3}\setlength{\tabcolsep}{5pt}
\begin{tabular}{@{}lllll@{}}\toprule
identity & order & series term $B_n$ & value & proved by\\\midrule
\eqref{eq:oddcubic} & $x^3$ & $5e_1+6u$ & $\frac{\pi^2}2\zeta(3)$ & Au (Sun (5.5))\\
\eqref{eq:oddcubic} & $x^6$ & $5e_2+6e_1u+6u^2$ & $\frac{\pi^2}2\bigl(\frac{\zeta(3)^2}2+\frac{31}{64}\zeta(6)\bigr)$ & this paper\\
\eqref{eq:evencubic} & $x^3$ & $5e_1+6u$ & $2\zeta(3)^2$ & Chu (Sun (4.5))\\
\eqref{eq:evencubic} & $x^6$ & $5e_2+6e_1u+6u^2$ & $\frac43\zeta(3)^3+\frac23\zeta(9)$ & Chu\\
\eqref{eq:evencubic} & $x^9$ & $5e_3+6e_2u+6e_1u^2+6u^3$ & $\frac23\zeta(3)^4+\frac43\zeta(3)\zeta(9)$ & this paper\\
\eqref{eq:oddbb} & $t$ & $5e_1+u$ & $\frac{7\pi^6}{7200}$ & Au (Sun (5.9))\\
\eqref{eq:oddbb} & $t^2$ & $5e_1^2+15e_2+5e_1u+u^2$ & $\frac{181\,\pi^{10}}{18144000}$ & this paper\\
\eqref{eq:oddbb} & $t^3$ & $5e_1^3+10e_1e_2+65e_3+(5e_1^2+15e_2)u+5e_1u^2+u^3$ & $\frac{178559\,\pi^{14}}{1743565824000}$ & this paper\\\bottomrule
\end{tabular}
\end{table}

The identity \eqref{eq:evencubic} is a special case of Chu's Theorem~2.2 \cite{chu2020alternating}, and its coefficients of $x^3$ and $x^6$ are equivalent to his Examples~3.11 and~3.20; the first is Sun's conjecture \cite[(4.5)]{sun2010lfunctions}. We have not found the entries of Table~\ref{tab:5f4} marked ``this paper'', or the closed forms \eqref{eq:oddcohen}--\eqref{eq:oddcubic}, in the literature. We end with the next coefficient of \eqref{eq:evencubic}:
\begin{equation}\label{eq:evencubic9}
\sum_{n\ge1}\frac{(-1)^{n-1}}{n^3\binom{2n}n}\Bigl(5e_3(n)+\frac{6e_2(n)}{n^3}+\frac{6e_1(n)}{n^6}+\frac6{n^9}\Bigr)=\frac23\zeta(3)^4+\frac43\zeta(3)\zeta(9),
\end{equation}
where $e_r(n)$ is the $r$-th elementary symmetric function of $1^{-3},2^{-3},\dots,(n-1)^{-3}$.

\section{Proofs of the main theorems}\label{sec:proofs}

We prove the three identities in the same four steps. First, we use a certificate to show that a normalised sum is periodic in one variable. We then multiply the difference of the two sides by trigonometric factors to obtain an entire function, and a crude bound shows that this function is a trigonometric polynomial of small degree. Finally, we show that it vanishes at the points where the series terminates, which are more than its degree allows.

In each case the terms $a_n$ of the normalised sum satisfy a certificate identity
\begin{equation}\label{eq:cert}
a_n(\beta+L)-a_n(\beta)=\rho(n+1)\,a_{n+1}(\beta)-\rho(n)\,a_n(\beta),
\end{equation}
where $L$ is the period and the certificate $\rho$ is a rational function of $n$ and the parameters that vanishes at the first index of the sum. We give the three certificates $\rho_{\rC}$, $\rho_{\rZ}$ and $\rho_{\rU}$ in Appendix~\ref{app:tables}. Since $a_{n+1}/a_n$ and $a_n(\beta+L)/a_n(\beta)$ are rational functions, checking \eqref{eq:cert} is a finite computation. Summing \eqref{eq:cert} over $n<N$ and letting $N\to\infty$, we find that $\sum_na_n$ is $L$-periodic, since the terms decay geometrically.

By analytic continuation, it suffices to prove Theorem~\ref{thm:CZ} for small real $\theta_2,\theta_3,\theta_4$ and Theorem~\ref{thm:upside} for small real $\ell_1,\ell_2$. We use the following two facts.

\begin{proposition}\label{prop:count}
Let $L>0$, let $B$ be entire with $B(\beta+L)=B(\beta)$, let $K\ge0$ be an integer, and suppose that $B(s+it)=o\bigl(e^{2\pi(K+1)|t|/L}\bigr)$ as $|t|\to\infty$, uniformly in $0\le s\le L$. If $B$ vanishes at $2K+1$ points that are distinct modulo $L$, then $B=0$.
\end{proposition}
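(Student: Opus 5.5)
The plan is to reduce to a statement about Laurent polynomials via the substitution $q=e^{2\pi i\beta/L}$. Since $B$ is entire and $L$-periodic, it factors as $B(\beta)=G(e^{2\pi i\beta/L})$ with $G$ holomorphic on $\mathbb C^*$. Then $G$ has a Laurent expansion $G(q)=\sum_{m\in\mathbb Z}c_mq^m$, i.e.\ $B(\beta)=\sum_m c_m e^{2\pi i m\beta/L}$, converging locally uniformly.

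Next we bound the coefficients and show only $|m|\le K$ survive. We have
\[
c_m=\frac1L\int_0^L B(s+it)\,e^{-2\pi i m(s+it)/L}\,ds
\]
for every real $t$, since the integrand is $L$-periodic in $s$ and holomorphic, so the horizontal line can be shifted freely. The factor $e^{-2\pi i m(it)/L}$ has modulus $e^{2\pi m t/L}$. For $m\ge K+1$, let $t\to-\infty$: the hypothesis gives $|B(s+it)|=o(e^{2\pi(K+1)|t|/L})$ uniformly in $s$, so
\[
|c_m|\le o\bigl(e^{2\pi(K+1)|t|/L}\bigr)\,e^{-2\pi m|t|/L}\to0 .
\]
Symmetrically, for $m\le-(K+1)$ let $t\to+\infty$. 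Hence $c_m=0$ for $|m|\ge K+1$. The hypothesis is a little-$o$ rather than $O$ precisely so that the borderline index $m=K+1$ is killed too.

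So $B(\beta)=q^{-K}Q(q)$, where $q=e^{2\pi i\beta/L}$ and $Q(q)=\sum_{m=-K}^{K}c_mq^{m+K}$ is a polynomial of degree at most $2K$. Points $\beta_1,\dots,\beta_{2K+1}$ that are distinct modulo $L$ give $2K+1$ distinct values $q_j=e^{2\pi i\beta_j/L}\neq0$, since the map $\beta\mapsto q$ is injective modulo $L$. Because $q_j\ne0$, $B(\beta_j)=0$ forces $Q(q_j)=0$. A polynomial of degree at most $2K$ with $2K+1$ distinct roots is identically zero, so $Q=0$ and therefore $B=0$.

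There is no real obstacle here. The only care needed is the uniformity in $s$ when taking the coefficient limit, which is exactly what the hypothesis supplies.
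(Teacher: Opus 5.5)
Your proof is correct and follows the paper's argument essentially line by line. You expand $B$ in powers of $q=e^{2\pi i\beta/L}$, kill the coefficients with $|m|>K$ by letting $t\to\mp\infty$ in the integral formula (using the uniform little-$o$ bound), and conclude that $q^K B$ is a polynomial of degree at most $2K$ with $2K+1$ distinct zeros.
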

\begin{proof}
Write $B=\sum_kb_kq^k$ with $q=e^{2\pi i\beta/L}$ and $b_k=\frac1L\int_0^L B(s+it)\,e^{-2\pi ik(s+it)/L}\,ds$ for every $t$. Letting $t\to-\infty$ for $k>K$ and $t\to+\infty$ for $k<-K$ gives $b_k=0$. So $q^KB$ is a polynomial in $q$ of degree at most $2K$ with $2K+1$ distinct zeros.
\end{proof}

\begin{lemma}\label{lem:pair}
Let $y$ be real, $\gamma\ge1$, and let $a,b$ be real with $|a+k|\le\gamma|b+k|$ for $0\le k<n$. Then $|(a+i\gamma y)_n|\le\gamma^n|(b+iy)_n|$. Moreover $|(b+iy)_n|\ge|(b)_n|$.
\end{lemma}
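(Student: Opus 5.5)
The plan is to compare the two Pochhammer products factor by factor. We have
\[
(a+i\gamma y)_n=\prod_{k=0}^{n-1}(a+k+i\gamma y),\qquad (b+iy)_n=\prod_{k=0}^{n-1}(b+k+iy),
\]
so it suffices to prove the single-factor inequality
\[
|a+k+i\gamma y|\le\gamma\,|b+k+iy|\qquad(0\le k<n)
\]
and then multiply the $n$ inequalities.

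For each factor we compare squared moduli, using that $a,b,y$ are real:
\[
|a+k+i\gamma y|^2=(a+k)^2+\gamma^2y^2,\qquad \gamma^2|b+k+iy|^2=\gamma^2(b+k)^2+\gamma^2y^2.
\]
The hypothesis $|a+k|\le\gamma|b+k|$ gives $(a+k)^2\le\gamma^2(b+k)^2$, and adding $\gamma^2y^2$ to both sides yields the factor inequality. Taking square roots and multiplying over $0\le k<n$ gives $|(a+i\gamma y)_n|\le\gamma^n|(b+iy)_n|$. The assumption $\gamma\ge1$ is not actually needed for this part; presumably it only reflects how the lemma is applied later.

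The second claim is proved the same way. For each $k$,
\[
|b+k+iy|^2=(b+k)^2+y^2\ge(b+k)^2,
\]
so $|b+k+iy|\ge|b+k|$, and multiplying over $k$ gives $|(b+iy)_n|\ge|(b)_n|$.

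There is no real obstacle. The only point to check is that $a$, $b$ and $y$ are real, so that the real and imaginary parts separate cleanly in each modulus; this is assumed in the statement. The case $n=0$ holds trivially, since both sides equal $1$.
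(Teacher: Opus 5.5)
Your proof is correct and is essentially the paper's proof: the same factor-by-factor comparison of squared moduli, $(a+k)^2+\gamma^2y^2\le\gamma^2\bigl((b+k)^2+y^2\bigr)$ and $(b+k)^2+y^2\ge(b+k)^2$. Your side remark is also right: the lemma only uses $\gamma\ge0$, so that $\gamma^2(\cdot)$ becomes $\gamma\,|\cdot|$ after taking square roots, and the condition $\gamma\ge1$ just reflects the later applications with $\gamma=1,2$.
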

\begin{proof}
$(a+k)^2+\gamma^2y^2\le\gamma^2\bigl((b+k)^2+y^2\bigr)$ and $(b+k)^2+y^2\ge(b+k)^2$.
\end{proof}

\subsection*{The extended Cullen and Zhao identities}
We regard everything as a function of $\beta=\theta_1=s+it$ in the strip $|s|<1.1$, with $\theta_2,\theta_3,\theta_4$ fixed in $[-\frac1{30},\frac1{30}]$. Then, by \eqref{eq:phi},
\begin{equation}\label{eq:strip}
\tau_1,\dots,\tau_4,\ \varphi_1,\ \varphi_2,\ \varphi_3,\ -\varphi_4\ \in\ \tfrac\beta2+[-0.05,0.05].
\end{equation}
In particular $|\operatorname{Re}\varphi_i|\le0.6$, and under $\beta\mapsto\beta+2$ each $\sin\pi\varphi_i$ and each $\cos\pi\tau_j$ changes sign, so $\prod_i\sin\pi\varphi_i$, $V$ and $E$ are $2$-periodic.

The $n$-th terms of the two series of Theorem~\ref{thm:CZ} are, in Pochhammer form,
\begin{gather}
T_{\rC}(n)=w_n\,\PC(n;\varphi)\,\frac{(2n+1)!^8}{(\frac12)_n^{16}}\,\frac{\prod_{\xi}(\frac12+\xi)_n}{\prod_{i=1}^4(1-\varphi_i)_{2n+1}(1+\varphi_i)_{2n+1}},\label{eq:TC-poch}\\
T_{\rZ}(n)=w^{\rZ}_n\,\PZ(n;\theta)\,\frac{n!^4(2n+1)!^4}{(\frac12)_n^{10}(\frac12)_{2n}}\,
\frac{\prod_{i}(\frac12+\theta_i)_n\prod_{j=2}^4(\frac12+\tau_j)_n(\frac12-\tau_j)_n\cdot(\frac12+\tau_1)_{2n}}{\prod_i(1-\varphi_i)_n(1+\varphi_i)_{2n+1}},\label{eq:TZ-poch}
\end{gather}
where $w_n$ and $w^{\rZ}_n$ are the weights of \eqref{eq:cullen} and \eqref{eq:zhao}, and $\xi$ runs over the sixteen numbers $\pm\theta_i,\pm\tau_j$. Put
\[
S_{\rC}=\sum_{n\ge0}\frac{T_{\rC}(n)}{\varphi_1\varphi_2\varphi_3\varphi_4},\qquad
S_{\rZ}=\sum_{n\ge0}\frac{T_{\rZ}(n)}{\varphi_1\varphi_2\varphi_3\varphi_4}.
\]
Since $\pi^4\prod_i\varphi_i\sq(\pi\varphi_i)=\prod_i\sin\pi\varphi_i$, Theorem~\ref{thm:CZ} says that
\[
B_{\rC}=\Bigl(\prod_{i=1}^4\sin\pi\varphi_i\Bigr)S_{\rC}-1024\,(V+E),\qquad
B_{\rZ}=\Bigl(\prod_{i=1}^4\sin\pi\varphi_i\Bigr)S_{\rZ}-256\,(V+2E)
\]
vanish identically.

\begin{lemma}\label{lem:CZprops}
The functions $B=B_{\rC}$ and $B=B_{\rZ}$ of $\beta$ have the following properties.
\begin{enumerate}
\item[(i)] $B$ is $2$-periodic.
\item[(ii)] $B$ is entire.
\item[(iii)] $B_{\rC}=O(|t|^8e^{2\pi|t|})$ and $B_{\rZ}=O(|t|^5e^{2\pi|t|})$ for $|t|\ge1$.
\item[(iv)] $B_{\rC}$ vanishes at $\beta=\pm\frac12$ and where $\tau_j=\frac12$, $j=1,\dots,4$; $B_{\rZ}$ vanishes at $\beta=-\frac12$, where $\tau_j=\frac12$, $j=2,3,4$, and where $\sigma=-1$. For generic $\theta_2,\theta_3,\theta_4$ these points are distinct modulo $2$.
\end{enumerate}
\end{lemma}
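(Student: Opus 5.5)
We treat the four items separately; each is a property of $\beta\mapsto B(\beta)$ with $\theta_2,\theta_3,\theta_4$ fixed in $[-\frac1{30},\frac1{30}]$.

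\emph{Periodicity (i).} Take $a_n(\beta)=T(n)/(\varphi_1\varphi_2\varphi_3\varphi_4)$ and $L=2$. Check the certificate identity \eqref{eq:cert} with $\rho_{\rC}$ or $\rho_{\rZ}$ from Appendix~\ref{app:tables}; this is a rational-function check, since $a_n(\beta+2)/a_n(\beta)$ and $a_{n+1}/a_n$ are rational. Summing over $n<N$ telescopes to $\rho(N)a_N(\beta)$, because $\rho(0)=0$. The ratio $a_{n+1}/a_n$ tends to the weight ratio $\frac1{4096}\cdot 2^{16}\cdot\ldots$, which has modulus $<1$, so $a_N$ decays geometrically and $\rho$ grows only polynomially. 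Hence $S$ is $2$-periodic. As noted after \eqref{eq:strip}, $\prod_i\sin\pi\varphi_i$, $V$ and $E$ are $2$-periodic, so $B$ is too.

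\emph{Entireness (ii).} By periodicity it suffices to work in the strip $|s|<1.1$. The only possible poles of $T(n)$ come from $(1\pm\varphi_i)_{2n+1}$ and $(1-\varphi_i)_n$, i.e. at $\varphi_i\in\mathbb Z\setminus\{0\}$. The extra factor $1/\prod\varphi_i$ may add poles at $\varphi_i=0$. All of these are at most simple in each $\varphi_i$, and the four $\varphi_i$ are distinct for generic fixed parameters. Each such pole is killed by the factor $\sin\pi\varphi_i$. Local uniform convergence, which Theorem~\ref{thm:CZ} asserts and follows from the geometric weight, then gives holomorphy.

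\emph{Growth (iii).} The trigonometric terms $V,E$ are $O(e^{2\pi|t|})$, since $\tau_j$ and $\theta_1$ have imaginary parts near $t/2$ and $t$. The same bound holds for $\prod\sin\pi\varphi_i=O(e^{4\pi|t|/2})$. So we need $S=O(|t|^8)$ (Cullen) or $O(|t|^5)$ (Zhao), uniformly in $n$ summably. We use Lemma~\ref{lem:pair} to pair each numerator Pochhammer $(\frac12+\xi)_n$ with a denominator $(1\pm\varphi_i)_{\cdot}$ of matching imaginary part. By \eqref{eq:strip}, $\xi$ and $\varphi_i$ have imaginary parts $\pm t/2$ up to $O(1)$ real perturbations, and the real parts satisfy $|a+k|\le\gamma|b+k|$ with $\gamma$ slightly above $1$. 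Then the ratio is bounded by $\gamma^n$ times a $t$-independent quantity, and $\gamma^n$ is absorbed by the geometric decay of the weight. The denominator uses length $2n+1$ versus numerator length $n$. The surplus denominator factors $|(1\pm\varphi_i+n)_{n+1}|\ge$ their real counterparts, by the second part of Lemma~\ref{lem:pair}, so they only help. The polynomial $\PC$ (degree $8$ in $\varphi$, via $e_4$) or $\PZ$ (degree $4$ in $\theta$, plus one from $\sigma$) gives the power of $|t|$. I expect this step to be the main obstacle. It requires arranging a clean matching of the sixteen numerator factors $\xi$ against the eight denominator blocks so that every pair satisfies the hypothesis of Lemma~\ref{lem:pair} with a uniform $\gamma$ and the $\gamma^n$ losses stay below the weight's decay, using the $1/30$ smallness.

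\emph{Zeros (iv).} At $\tau_j=\frac12$ (respectively $\beta=\pm\frac12$, $\sigma=-1$), a numerator factor $(\frac12-\tau_j)_n$, $(\frac12+\theta_1)_n$ or $(\frac12+\tau_1)_{2n}$ vanishes for $n\ge1$. The series then terminates at its $n=0$ term, which is evaluated by hand. On the other side, $E=0$ or $V=0$ holds there, because a cosine factor vanishes. We check that $\prod\sin\pi\varphi_i\cdot T(0)/\prod\varphi_i$ equals $1024(V+E)$, resp. $256(V+2E)$, using Lemma~\ref{lem:trig} to convert $V-E$ into $\prod\sin\pi\varphi_i$. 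This is a finite check with the explicit values of $\PC(0;\varphi)$ and $\PZ(0;\theta)$. Finally, the listed points are $\beta=\pm\frac12$ and $\beta=1+O(\theta_2,\theta_3,\theta_4)$ shifted differently for each $j$. Their linear dependence on the fixed parameters differs, so they are distinct mod $2$ for generic values. This gives six points for Cullen and five for Zhao, enough for Proposition~\ref{prop:count} with $K=2$.
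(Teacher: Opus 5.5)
Your (i), (ii) and (iv) follow the paper. In (i) you telescope \eqref{eq:cert} using $\rho(0)=0$ and the geometric decay of the terms; the limits of $a_{n+1}/a_n$ are $\frac1{4096}$ and $-\frac{27}{256}$. In (ii) you show holomorphy on the strip $|s|<1.1$, which is wider than the period. There $|\operatorname{Re}\varphi_i|\le0.6$, so the only poles are those at $\varphi_i=0$ coming from the normalisation. In (iv) the series terminates at $n=0$, a finite divisibility check follows, and you get six resp.\ five points distinct modulo $2$.

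\textbf{The genuine gap is in (iii).} Your pairing rests on a false premise: \eqref{eq:strip} is a statement about the $\tau_j$ and $\varphi_i$, not about the $\theta_i$. Among $\xi=\pm\theta_i,\pm\tau_j$, the symbols $(\frac12\pm\theta_1)_n$ have imaginary part $\pm t$, since $\theta_1=\beta$. The symbols involving $\theta_2,\theta_3,\theta_4$ are real. Every denominator factor has imaginary part $\pm t/2$, so Lemma~\ref{lem:pair} with $\gamma$ near $1$ cannot be applied to the $\theta_1$ symbols. The matching you hope to arrange therefore does not exist.

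What is needed is the paper's arrangement:
\begin{enumerate}
\item Pair the $\tau$ symbols, with $\gamma=1$, against initial segments of denominator blocks with the same imaginary part.
\item Pair each $\theta_1$ symbol, with $\gamma=2$, against the factors of index $n,\dots,2n-1$ of a block of imaginary part $\pm t/2$. The initial segments are needed for the $\tau$ symbols.
\item Bound the real symbols $(\frac12\pm\theta_i)_n$, $i\ge2$, above by $(\frac8{15})_n$, and every unused denominator factor of index $k$ below by $k+\frac25$.
\end{enumerate}
You must then verify that the resulting $t$-independent majorants $M_n$ and $M^{\rZ}_n$ decay geometrically. This is not just a matter of the weight absorbing $\gamma^n$. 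Each $\theta_1$ pairing costs about $8^n$ relative to the actual decay of the terms: $2^n$ from $\gamma$, and $4^n$ from using factors of index at least $n$. The ratios therefore become $\frac{64}{4096}=\frac1{64}$ and $\frac{8\cdot27}{256}=\frac{27}{32}$, the latter with little room to spare.

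Your count of powers of $|t|$ is also off, though this is harmless for Proposition~\ref{prop:count}. $\PC$ has degree $12$ in $\varphi$: Table~\ref{tab:PC} contains $e_1^6$, $e_2^3$ and $e_2e_4$. Likewise $\PZ$ contains $f_1^8$ and $f_4^2$. The exponents $8$ and $5$ in (iii) come from $|\PC|\le C(n+1)^4(1+|t|)^{12}$ and $|\PZ|\le C(n+1)^4(1+|t|)^9$, together with the normalising factor $1/\prod_i\varphi_i$ where $|\varphi_i|\ge|t|/2$. Your account leaves that factor out.
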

\begin{proof}
(i) For both sums, \eqref{eq:cert} holds with $a_n=T(n)/\prod_i\varphi_i$, $L=2$ and the certificates $\rho_{\rC}$ and $\rho_{\rZ}$ of Appendix~\ref{app:tables}, so $S_{\rC}$ and $S_{\rZ}$ are $2$-periodic, and so are the trigonometric factors.

(ii) In the strip $|s|<1.1$ we have $|\operatorname{Re}\varphi_i|\le0.6$, so the terms have poles only at $\varphi_i=0$, where $\sin\pi\varphi_i$ vanishes, and the series converge uniformly on compact sets. So $B$ is holomorphic on a strip wider than its period, hence entire.

(iii) Apply Lemma~\ref{lem:pair} by pairing the numerator symbols involving $\tau_j$ with denominator blocks of the same imaginary part ($\gamma=1$), and those involving $\theta_1$ with blocks of half its imaginary part, beginning at index $n$ ($\gamma=2$); the required inequalities between real parts follow from \eqref{eq:strip}. Bound each unused denominator factor of index $k$ below by $k+\frac25$ (as $|\operatorname{Re}\varphi_i|\le0.6$), and each $(\frac12\pm\theta_i)_n$, $i\ge2$, above by $(\frac8{15})_n$. This gives $|T_{\rC}(n)|\le|\PC(n;\varphi)|\,M_n$ and $|T_{\rZ}(n)|\le|\PZ(n;\theta)|\,M^{\rZ}_n$ for $n\ge1$, where
\begin{gather*}
M_n = \frac{4^n\,w_n\,(2n+1)!^8\,(\frac8{15})_n^6}{(\frac12)_n^{16}\,(n+\frac25)_{n+1}^6\,(2n+\frac25)^2},
\\
M^{\rZ}_n = \frac{2^n\,|w^{\rZ}_n|\,n!^4(2n+1)!^4\,(\frac8{15})_n^3}{(\frac12)_n^{10}(\frac12)_{2n}\,(2n+\frac25)^2\,(n+\frac25)_{n+1}\,(\frac25)_{2n+1}}.
\end{gather*}
These do not depend on $\beta$, and $M_{n+1}/M_n\to\frac1{64}$, $M^{\rZ}_{n+1}/M^{\rZ}_n\to\frac{27}{32}$. The polynomials satisfy $|\PC(n;\varphi)|\le C(n+1)^4(1+|t|)^{12}$ and $|\PZ(n;\theta)|\le C(n+1)^4(1+|t|)^9$, and $|\varphi_i|\ge\frac{|t|}2$, so $S_{\rC}=O(|t|^8)$ and $S_{\rZ}=O(|t|^5)$, while the trigonometric factors are $O(e^{2\pi|t|})$.

(iv) At each of these points one of the factors $(\frac12\mp\theta_1)_n$, $(\frac12\mp\tau_j)_n$, $(\frac12+\tau_1)_{2n}$ vanishes for $n\ge1$, so the series reduce to their terms $n=0$.

For $B_{\rC}$: at $\beta=\pm\frac12$ we have $V=0$, so $\prod_i\sin\pi\varphi_i=-E$ by Lemma~\ref{lem:trig}; at $\tau_j=\frac12$ we have $E=0$ and $\prod_i\sin\pi\varphi_i=V$. A direct computation, using the value of \eqref{eq:PC-full} at $n=0$ and \eqref{eq:phi}, shows that
\begin{align*}
\PC(0;\varphi)+1024\prod_i\varphi_i(1-\varphi_i^2)&\quad\text{is divisible by}\quad\prod_i(4\theta_i^2-1),\\
\PC(0;\varphi)-1024\prod_i\varphi_i(1-\varphi_i^2)&\quad\text{is divisible by}\quad\prod_j(4\tau_j^2-1).
\end{align*}
Since the series terminates, $S_{\rC}=T_{\rC}(0)/\prod_i\varphi_i=\PC(0;\varphi)/\prod_i\varphi_i(1-\varphi_i^2)$ at these points, so $S_{\rC}=-1024$ where $V=0$ and $S_{\rC}=1024$ where $E=0$. Hence $B_{\rC}=-E(S_{\rC}+1024)=0$ in the first case and $B_{\rC}=V(S_{\rC}-1024)=0$ in the second.

For $B_{\rZ}$: by Lemma~\ref{lem:trig}, $B_{\rZ}=768\bigl(R\prod_i\sin\pi\varphi_i-E\bigr)$ with $R=S_{\rZ}/768-\frac13$, and at these points, by \eqref{eq:PZ-full} at $n=0$,
\[
R=\frac{T_{\rZ}(0)}{768\,\varphi_1\varphi_2\varphi_3\varphi_4}-\frac13=-\frac{(1+\sigma)(3+\sigma)\prod_{j=2}^4(4\tau_j^2-1)}{256\prod_i\varphi_i(1+\varphi_i)},
\]
and $B_{\rZ}=0$ because:
\begin{itemize}
\item at $\beta=-\frac12$: $V=0$, so $\prod_i\sin\pi\varphi_i=-E$, and $R=-1$ by $\varphi_4(1+\varphi_4)=\frac14(1+\sigma)(3+\sigma)$ and $4\tau_j^2-1=4\varphi_{5-j}(1+\varphi_{5-j})$;
\item at $\tau_j=\frac12$, $j=2,3,4$: $E=0$ and $R=0$;
\item at $\sigma=-1$: $E=0$, since $\tau_1=\frac\sigma2$, and $R=0$.
\end{itemize}
Finally, by \eqref{eq:strip}, $c_j=\tau_j-\frac\beta2$ depends only on $\theta_2,\theta_3,\theta_4$ and $|c_j|\le0.05$, so $\tau_j=\pm\frac12$ means $\beta\equiv1-2c_j\pmod 2$; and $\sigma=-1$ means $\tau_1=-\frac12$. Hence the points are distinct modulo $2$ when the $c_j$ are distinct, that is, for generic $\theta_2,\theta_3,\theta_4$.
\end{proof}

\begin{proof}[Proof of Theorem~\ref{thm:CZ}]
Apply Proposition~\ref{prop:count} with $L=K=2$ to $B_{\rC}$ and $B_{\rZ}$, using Lemma~\ref{lem:CZprops}.
\end{proof}

\subsection*{The extended upside-down identity}
We regard everything as a function of $z=s+it$, with real $\ell_1\ne\ell_2$ fixed in $(0,\frac1{10})$, and write $\ell=(\ell_1,\ell_2,-z,z-\ell_1-\ell_2)$ and $u=\ell_1+\ell_2-z$. Let $T_{\rU}(n)$ be the $n$-th term of \eqref{eq:thm-upside}, $\psi_n=zu\,T_{\rU}(n)$ and $\Psi=\sum_{n\ge1}\psi_n$. Theorem~\ref{thm:upside} says that $\Psi=\Psi^*$, where
\[
\Psi^*=8\pi^2\,\sq(\pi \ell_1)\sq(\pi \ell_2)\,\frac{\sin\pi z\,\sin\pi u}{\cos\pi(\ell_1-z)\cos\pi(\ell_2-z)\cos\pi(\ell_1+\ell_2)},
\]
that is, that $B_{\rU}=\cos\pi(\ell_1-z)\cos\pi(\ell_2-z)\,(\Psi-\Psi^*)$ vanishes identically.

\begin{lemma}\label{lem:Uprops}
The function $B_{\rU}$ of $z$ has the following properties.
\begin{enumerate}
\item[(i)] $B_{\rU}$ is $1$-periodic.
\item[(ii)] $B_{\rU}$ is entire.
\item[(iii)] $B_{\rU}=O(e^{2\pi|t|})$ for $0\le s\le1$ and $|t|\ge1$.
\item[(iv)] $B_{\rU}$ vanishes at $z=0$, $z=\ell_1+\ell_2$ and $z=\frac12$.
\end{enumerate}
\end{lemma}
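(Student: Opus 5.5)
I follow the template of Lemma~\ref{lem:CZprops}, treating the four properties in order with $z$ as the variable and $\ell_1\ne\ell_2$ fixed in $(0,\frac1{10})$.

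For (i), the certificate identity \eqref{eq:cert} holds with $a_n=\psi_n$, $L=1$ and the certificate $\rho_{\rU}$ of Appendix~\ref{app:tables}, and $\rho_{\rU}$ vanishes at $n=1$. Checking this is a finite rational computation. Summing over $n<N$ and letting $N\to\infty$ (the terms decay like $(4/27)^n$ times powers of $n$) shows that $\Psi$ is $1$-periodic. For the trigonometric side, $z\mapsto z+1$ sends $u\mapsto u-1$. So $\sin\pi z$, $\sin\pi u$, $\cos\pi(\ell_1-z)$ and $\cos\pi(\ell_2-z)$ each change sign. Hence $\Psi^*$ and the prefactor $\cos\pi(\ell_1-z)\cos\pi(\ell_2-z)$ are both $1$-periodic, and therefore so is $B_{\rU}$.

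For (ii), it suffices to show that $B_{\rU}$ is holomorphic on a strip slightly wider than $[0,1]$, say $-0.1<s<1.1$. Since $\ell_1+\ell_3=\ell_1-z$ and $\ell_1+\ell_4=-(\ell_2-z)$, the poles of the summand come from the denominator factors $(\frac12\pm(\ell_1-z))_n$ and $(\frac12\pm(\ell_2-z))_n$. These poles lie at $\ell_i-z\in\frac12+\mathbb Z$, which are exactly the zeros of $\cos\pi(\ell_1-z)\cos\pi(\ell_2-z)$. The poles of $\Psi^*$ are cancelled by the same factor. The remaining issue is that the pole order could grow with $n$. To handle this, I would work on a small disc around each such point and bound $\prod_{i}\cos\pi(\ell_1+\ell_i)\,T_{\rU}(n)$ uniformly there. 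Only the $k=0$ factor can vanish, since the relevant poles in the strip are at $\ell_i-z=\pm\frac12$; the factors with $k\ge1$ stay bounded below. So each term times the cosines is holomorphic, and the series converges locally uniformly. The factor $\cos\pi(\ell_1+\ell_2)$ is a nonzero constant.

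For (iii), I use Lemma~\ref{lem:pair}. Pair each numerator symbol $(1-\ell_i)_{n-1}$, $(1+2\ell_i)_{n-1}$ containing $z$ against a denominator symbol $(\frac12\pm(\ell_i-z))_n$ with matching imaginary part. The symbols with $2z$ need $\gamma=2$ and a half-length block, as in Lemma~\ref{lem:CZprops}(iii). The aim is a bound $|\psi_n|\le C(1+|t|)^{m}\,M^{\rU}_n$, where $M^{\rU}_n$ is independent of $z$ and $M^{\rU}_{n+1}/M^{\rU}_n\to\frac{4}{27}\cdot 2^{c}<1$. The factor $zu$ and the polynomial $\PU$ are what produce the power of $|t|$. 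I expect this to be the main obstacle. The target is $O(e^{2\pi|t|})$ with no polynomial loss, so the bound must be balanced carefully, for example by using the extra $|t|$-decay from the unpaired denominator factors, $|(\frac12\pm(\ell_i-z))_n|\ge c|t|$. The trigonometric prefactor is $O(e^{2\pi|t|})$. The quotient $\Psi^*$ is $O(1)$, because $\sin\pi z\sin\pi u/(\cos\cos)$ is bounded. So the whole difficulty is showing that $\Psi$ itself is $O(1)$ uniformly. The stated bound suffices for Proposition~\ref{prop:count} with $L=1$ and $K=1$ only if $o(e^{4\pi|t|})$ is all that is needed, which it is. So some polynomial slack is in fact tolerable, and I would settle for $O(|t|^m e^{2\pi|t|})$ if exact $O(e^{2\pi|t|})$ proves awkward.

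For (iv), at $z=0$ and $z=\ell_1+\ell_2$ we have $\ell_3=0$ or $\ell_4=0$. The factor $zu$ then vanishes in every $\psi_n$, the symbols $(1\mp\ell_i)_{n-1}$ stay finite, and both $\Psi$ and $\Psi^*$ vanish. At $z=\frac12$, the factor $(1-2z)$ in the numerator symbol $(1+2\ell_3)_{n-1}$ kills all terms with $n\ge2$, which reduces $\Psi$ to $\psi_1$. Equivalently, one can use the termination of $(\frac12-z)$-type factors. It then remains to check the finite identity $\psi_1=\Psi^*$ at $z=\frac12$, directly from $\PU(1;\ell)$ in \eqref{eq:PU-full}. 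These three points are distinct modulo $1$, so Proposition~\ref{prop:count} applies with $2K+1=3$.
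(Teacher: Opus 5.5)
Your treatment of (i), (ii) and (iv) follows the paper's proof: periodicity from $\rho_{\rU}$ (which carries the factor $n-1$), cancellation of the poles in $\ell_1+\frac12+\mathbb Z$ and $\ell_2+\frac12+\mathbb Z$ by the two cosines, and termination at $z=\frac12$ through $(1-2z)_{n-1}$. Your worry in (ii) about growing pole order does not arise: for each $n$ the zeros of the denominator of $T_{\rU}(n)$ are simple, because $\ell_1-\ell_2\notin\mathbb Z$. Still, factoring out the common $k=0$ factor near each pole is a fine substitute for the paper's maximum-principle step. The finite check you defer in (iv) is quick. Since $g_1=0$, $\PU(1;\ell)=32(1+g_2-g_3+g_4)=32\prod_i(1-\ell_i)$, and at $z=\frac12$ both $\psi_1$ and $\Psi^*$ equal $-8/(\ell_1\ell_2)$.

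The real gap is (iii). You leave the estimate undone and say you may settle for $O(|t|^m e^{2\pi|t|})$. That suffices for Proposition~\ref{prop:count}, but it is not what the lemma asserts. Also, the pairing you sketch, a ``half-length block'' for the $2z$ symbols as in Lemma~\ref{lem:CZprops}, has no room here: there are exactly four numerator symbols of length $n-1$ and four denominator symbols of length $n$. The bound closes with no slack, as follows.

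Apply Lemma~\ref{lem:pair} to factor $k$ of $(1+z)_{n-1}$, $(1+u)_{n-1}$, $(1+2z-2\ell_1-2\ell_2)_{n-1}$, $(1-2z)_{n-1}$. Pair these with factor $k+1$ of the whole symbols $(\frac12+z-\ell_2)_n$, $(\frac12-z+\ell_2)_n$, $(\frac12+z-\ell_1)_n$, $(\frac12-z+\ell_1)_n$ respectively, with $\gamma=1,1,2,2$. The shift by one is what makes the real-part hypothesis hold for $0\le s\le1$. For example, $1+s+k\le\frac32+s-\ell_2+k$ and $|1-2s+k|\le2(\frac32-s+\ell_1+k)$ both need the shift.

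The unpaired factors are then exactly the four $k=0$ denominator factors, each of modulus at least $|t|$. So the $z$-dependent part is at most $4^{n-1}/t^4$, and the majorant has ratio tending to $\frac{16}{27}$. On the other side, $g_2,g_3,g_4$ are quadratic in $z$; indeed $\PU$ depends on $z$ only through $zu$. Hence $|zu\,\PU(n;\ell)|\le C(n+1)^3(1+|t|)^4$, and so $\Psi=O(1)$ for $|t|\ge1$. Since $\Psi^*$ is bounded there and the cosines are $O(e^{2\pi|t|})$, you get $B_{\rU}=O(e^{2\pi|t|})$ exactly as stated.
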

\begin{proof}
(i) \eqref{eq:cert} holds with $a_n=\psi_n$, $L=1$ and the certificate $\rho_{\rU}$ of Appendix~\ref{app:tables}, so $\Psi$ is $1$-periodic, and so is $\Psi^*$.

(ii) The poles of $\Psi^*$ and of the terms $T_{\rU}(n)$ are simple and lie in $\ell_1+\frac12+\mathbb Z$ and $\ell_2+\frac12+\mathbb Z$, where the cosines vanish. So the terms of $\cos\pi(\ell_1-z)\cos\pi(\ell_2-z)\,\Psi$ are entire, and since the series converges uniformly on compact sets away from these poles, by the maximum principle it converges uniformly on all compact sets.

(iii) In Pochhammer form the part of $T_{\rU}(n)$ that depends on $z$ is
\[
\frac{(1+z)_{n-1}\,(1-z+\ell_1+\ell_2)_{n-1}\,(1+2z-2\ell_1-2\ell_2)_{n-1}\,(1-2z)_{n-1}}{(\frac12+z-\ell_2)_n\,(\frac12-z+\ell_2)_n\,(\frac12+z-\ell_1)_n\,(\frac12-z+\ell_1)_n}.
\]
Lemma~\ref{lem:pair} applies to the factor $k$ of each numerator symbol and the factor $k+1$ of the denominator symbol below it, with $\gamma=1,1,2,2$, and the four factors $k=0$ of the denominator have modulus at least $|t|$. Hence $|T_{\rU}(n)|\le|\PU(n;\ell)|\,M^{\rU}_n/t^4$ with $M^{\rU}_{n+1}/M^{\rU}_n\to\frac{16}{27}$. As $|\PU(n;\ell)|\le C(n+1)^3(1+|t|)^2$, $\Psi=O(1)$, and the trigonometric factors are $O(e^{2\pi|t|})$.

(iv) At $z=0$ and $z=\ell_1+\ell_2$ both $\Psi$ and $\Psi^*$ vanish, because of the factors $z$ and $u$. At $z=\frac12$ the series terminates, and both sides equal $-8/(\ell_1\ell_2)$.
\end{proof}

\begin{proof}[Proof of Theorem~\ref{thm:upside}]
Apply Proposition~\ref{prop:count} with $L=K=1$ to $B_{\rU}$, using Lemma~\ref{lem:Uprops}.
\end{proof}

\appendix
\makeatletter\renewcommand{\@seccntformat}[1]{Appendix~\csname the#1\endcsname:\ }\makeatother
\section{Polynomials and certificates}\label{app:tables}

\subsection*{The polynomials of Theorems~\ref{thm:CZ} and~\ref{thm:upside}}
With $e_j$, $f_j$ and $g_j$ as in Section~\ref{sec:theorems},
\begin{multline}\label{eq:PC-full}
\PC(n;\varphi)=\Pc(n)-\frac{2e_1\,P_{\rC,1}(n)}{(2n+1)^2}+\frac{4e_2\,P_{\rC,2}(n)}{(2n+1)^4}-\frac{64e_3\,P_{\rC,3}(n)}{(2n+1)^6}+\frac{64e_4\,P_{\rC,4}(n)}{(2n+1)^8}\\
+\sum_{\deg\ge2}c_{\rC}(\kappa;n)\,e_1^{\kappa_1}e_2^{\kappa_2}e_3^{\kappa_3}e_4^{\kappa_4},
\end{multline}
with $P_{\rC,1},\dots,P_{\rC,4}$ as in \eqref{eq:PCi}, and the nineteen coefficients $c_{\rC}(\kappa;n)$ of the monomials of degree at least $2$ are listed in Table~\ref{tab:PC}. Next,
\begin{multline}\label{eq:PZ-full}
\PZ(n;\theta)=\Pz(n)+\frac{2f_1P_{\rZ,1}(n)}{2n+1}+\frac{8f_2P_{\rZ,2}(n)}{(2n+1)^2}-\frac{16f_3P_{\rZ,3}(n)}{(2n+1)^3}-\frac{16f_4P_{\rZ,4}(n)}{(2n+1)^4}\\
+\sum_{\deg\ge2}c_{\rZ}(\kappa;n)\,f_1^{\kappa_1}f_2^{\kappa_2}f_3^{\kappa_3}f_4^{\kappa_4},
\end{multline}
with $P_{\rZ,1},\dots,P_{\rZ,4}$ as in \eqref{eq:PCi}, and the forty coefficients $c_{\rZ}(\kappa;n)$ of degree at least $2$ are listed in Table~\ref{tab:PZ}. Finally,
\begin{equation}\label{eq:PU-full}
\PU(n;\ell)=\Pu(n)+(44n-12)\,g_2-\frac{8(5n-1)}{2n-1}\,g_3+\frac{32}{n}\,g_4 .
\end{equation}
\begin{table}[ht]
\caption{The coefficients $c_{\rC}(\kappa;n)$ in \eqref{eq:PC-full}.}\label{tab:PC}
\small\renewcommand{\arraystretch}{1.2}
\begin{tabular}{@{}ll@{\qquad}ll@{}}\toprule
monomial & coefficient & monomial & coefficient\\\midrule
$e_1^{2}$ & $\frac{4792n^{2}+1674n+151}{(2n+1)^{2}}$ & $e_1e_2^{2}$ & $-\frac{160(4n+1)}{(2n+1)^{7}}$\\
$e_1^{3}$ & $-\frac{4(188n+31)}{(2n+1)^{3}}$ & $e_1e_4$ & $-\frac{128(80n^{2}+46n+7)}{(2n+1)^{8}}$\\
$e_1e_2$ & $\frac{16(136n^{2}+182n+25)}{(2n+1)^{4}}$ & $e_2e_3$ & $\frac{256(8n+1)}{(2n+1)^{7}}$\\
$e_1^{4}$ & $\frac{51(4n+1)}{(2n+1)^{5}}$ & $e_1^{6}$ & $\frac{4n+1}{(2n+1)^{9}}$\\
$e_1^{2}e_2$ & $-\frac{8(116n+37)}{(2n+1)^{5}}$ & $e_1^{4}e_2$ & $-\frac{12(4n+1)}{(2n+1)^{9}}$\\
$e_1e_3$ & $\frac{128(48n^{2}+22n+3)}{(2n+1)^{6}}$ & $e_1^{2}e_2^{2}$ & $\frac{48(4n+1)}{(2n+1)^{9}}$\\
$e_2^{2}$ & $\frac{16(28n+23)}{(2n+1)^{5}}$ & $e_1^{2}e_4$ & $-\frac{64(8n+3)}{(2n+1)^{9}}$\\
$e_1^{5}$ & $-\frac{10(4n+1)}{(2n+1)^{7}}$ & $e_2^{3}$ & $-\frac{64(4n+1)}{(2n+1)^{9}}$\\
$e_1^{3}e_2$ & $\frac{80(4n+1)}{(2n+1)^{7}}$ & $e_2e_4$ & $\frac{256(8n+3)}{(2n+1)^{9}}$\\
$e_1^{2}e_3$ & $-\frac{64(8n+1)}{(2n+1)^{7}}$ & &\\
\bottomrule
\end{tabular}
\end{table}

\begin{table}[ht]
\caption{The coefficients $c_{\rZ}(\kappa;n)$ in \eqref{eq:PZ-full}.}\label{tab:PZ}
\footnotesize\renewcommand{\arraystretch}{1.15}
\begin{tabular}{@{}ll@{\quad}ll@{}}\toprule
monomial & coefficient & monomial & coefficient\\\midrule
$f_1^{2}$ & $-\frac{2(1010n^{3}+711n^{2}+159n+12)}{2n+1}$ & $f_3^{2}$ & $-\frac{64(92n^{2}+61n+9)}{(2n+1)^{4}}$\\
$f_1^{3}$ & $-\frac{2(2332n^{4}+2784n^{3}+1251n^{2}+241n+18)}{(2n+1)^{3}}$ & $f_1^{7}$ & $-\frac{4(5n+2)}{(2n+1)^{4}}$\\
$f_1f_2$ & $\frac{16(796n^{4}+924n^{3}+415n^{2}+80n+6)}{(2n+1)^{3}}$ & $f_1^{5}f_2$ & $\frac{32(5n+2)}{(2n+1)^{4}}$\\
$f_1^{4}$ & $-\frac{232n^{4}+196n^{3}+54n^{2}-5n-2}{(2n+1)^{4}}$ & $f_1^{4}f_3$ & $-\frac{24(17n+6)}{(2n+1)^{4}}$\\
$f_1^{2}f_2$ & $-\frac{4(752n^{4}+1196n^{3}+648n^{2}+155n+14)}{(2n+1)^{4}}$ & $f_1^{3}f_2^{2}$ & $-\frac{64(5n+2)}{(2n+1)^{4}}$\\
$f_1f_3$ & $-\frac{8(432n^{4}+372n^{3}+96n^{2}-3n-2)}{(2n+1)^{4}}$ & $f_1^{3}f_4$ & $-\frac{32(13n^{2}+9n+2)}{(2n+1)^{5}}$\\
$f_2^{2}$ & $\frac{16(172n^{2}+75n+9)}{(2n+1)^{2}}$ & $f_1^{2}f_2f_3$ & $\frac{96(17n+6)}{(2n+1)^{4}}$\\
$f_1^{5}$ & $\frac{2(244n^{3}+228n^{2}+75n+10)}{(2n+1)^{4}}$ & $f_1f_2f_4$ & $\frac{128(13n^{2}+9n+2)}{(2n+1)^{5}}$\\
$f_1^{3}f_2$ & $-\frac{8(438n^{3}+411n^{2}+130n+16)}{(2n+1)^{4}}$ & $f_1f_3^{2}$ & $-\frac{64(31n+10)}{(2n+1)^{4}}$\\
$f_1^{2}f_3$ & $\frac{16(94n^{3}+75n^{2}+24n+4)}{(2n+1)^{4}}$ & $f_3f_4$ & $-\frac{128(16n^{2}+9n+2)}{(2n+1)^{5}}$\\
$f_1f_2^{2}$ & $\frac{32(97n^{2}+43n+6)}{(2n+1)^{3}}$ & $f_1^{8}$ & $-\frac{2}{(2n+1)^{4}}$\\
$f_1f_4$ & $-\frac{32(310n^{3}+351n^{2}+132n+16)}{(2n+1)^{4}}$ & $f_1^{6}f_2$ & $\frac{16}{(2n+1)^{4}}$\\
$f_2f_3$ & $-\frac{64n(32n+5)}{(2n+1)^{3}}$ & $f_1^{5}f_3$ & $-\frac{32}{(2n+1)^{4}}$\\
$f_1^{6}$ & $\frac{4n(4n-1)}{(2n+1)^{4}}$ & $f_1^{4}f_2^{2}$ & $-\frac{32}{(2n+1)^{4}}$\\
$f_1^{4}f_2$ & $-\frac{4(22n^{2}-13n-2)}{(2n+1)^{4}}$ & $f_1^{4}f_4$ & $-\frac{16(7n+2)}{(2n+1)^{5}}$\\
$f_1^{3}f_3$ & $-\frac{16(49n^{2}+41n+6)}{(2n+1)^{4}}$ & $f_1^{3}f_2f_3$ & $\frac{128}{(2n+1)^{4}}$\\
$f_1^{2}f_2^{2}$ & $\frac{16(6n^{2}-9n-2)}{(2n+1)^{4}}$ & $f_1^{2}f_2f_4$ & $\frac{64(7n+2)}{(2n+1)^{5}}$\\
$f_1^{2}f_4$ & $-\frac{96(17n^{2}+11n+2)}{(2n+1)^{4}}$ & $f_1^{2}f_3^{2}$ & $-\frac{128}{(2n+1)^{4}}$\\
$f_1f_2f_3$ & $\frac{32(108n^{2}+87n+14)}{(2n+1)^{4}}$ & $f_1f_3f_4$ & $-\frac{128(7n+2)}{(2n+1)^{5}}$\\
$f_2f_4$ & $\frac{64(32n^{2}+19n+4)}{(2n+1)^{4}}$ & $f_4^{2}$ & $\frac{256}{(2n+1)^{4}}$\\
\bottomrule
\end{tabular}
\end{table}

\subsection*{The certificates}
We computed the certificates of \eqref{eq:cert} with \textsf{WZSeedTools}, as WZ mates of parameter steps of the seeds. They are as follows; we write $\beta=\theta_1$ and $u=\ell_1+\ell_2-z$. For the extended Cullen identity, with $\chi$ running over $\varphi_1,\varphi_2,\varphi_3,-\varphi_4$,
\[
\rho_{\rC}(n)=\frac{16384\,n\,N_{\rC}(n)\prod_\chi(2n-\chi)(2n+1-\chi)(2n+1+\chi)}{(2n+1)^8\,\PC(n;\varphi)\,(1+2\beta-2n)(3+2\beta-2n)\prod_{j=1}^4(1+2\tau_j-2n)},
\]
where, with $s_1,s_2$ the elementary symmetric functions of $\theta_2^2,\theta_3^2,\theta_4^2$ and $k_3=\theta_2\theta_3\theta_4$,
\begin{align*}
N_{\rC}(n)={}&2+12\beta+27\beta^2+29\beta^3+15\beta^4+3\beta^5-s_1(1+5\beta+6\beta^2+2\beta^3)-(1+\beta)(s_1^2-4s_2)-2k_3\\
&+8n\bigl((1+\beta)^3-(1+\beta)s_1+2k_3\bigr)+n^2\bigl(20(1+\beta)s_1-24k_3-24-64\beta-60\beta^2-20\beta^3\bigr)\\
&+32(1+\beta)n^3-64(1+\beta)n^4.
\end{align*}
For the extended Zhao identity,
\[
\rho_{\rZ}(n)=-\frac{2048\,n\,N_{\rZ}(n)\,(\varphi_4+2n)(\varphi_4+2n+1)\prod_{i=1}^3(\varphi_i-n)(\varphi_i+2n+1)}{(2n+1)^4\,\PZ(n;\theta)\,(1+2\beta)(3+2\beta)(1+2\tau_1)\prod_{j=2}^4(1+2\tau_j-2n)},
\]
where, with $p_1,p_2,p_3$ the elementary symmetric functions of $\theta_2,\theta_3,\theta_4$,
\begin{align*}
N_{\rZ}(n)={}&6\beta^5+30\beta^4-2\beta^3(2p_1^2-4p_2-29)-6\beta^2(2p_1^2-4p_2-9)\\
&-2\beta(p_1^4+5p_1^2-4p_1^2p_2-10p_2+8p_1p_3-12)-2(p_1^4+p_1^2-4p_1^2p_2-2p_2+8p_1p_3+2p_3-2)\\
&+n\bigl(15\beta^4-4\beta^3(p_1-18)-3\beta^2(2p_1^2+4p_1-4p_2-41)\\
&\qquad-2\beta(2p_1^3+12p_1^2+5p_1-8p_1p_2-24p_2+12p_3-45)\\
&\qquad-p_1^4-4p_1^3-17p_1^2-2p_1+4p_1^2p_2+16p_1p_2+32p_2-8p_1p_3+24\bigr)\\
&+n^2\bigl(-14\beta^3-6\beta^2(p_1+4)+2\beta(11p_1^2-12p_1-20p_2-7)\\
&\qquad-2(p_1^3-8p_1^2+9p_1-4p_1p_2+8p_2+16p_3+2)\bigr)\\
&+n^3\bigl(-36\beta^2+24\beta(p_1-1)+4(3p_1^2+6p_1-8p_2+2)\bigr)+8n^4(p_1-9\beta-6)-32n^5.
\end{align*}
For the extended upside-down identity,
\begin{align*}
\rho_{\rU}(n)={}&\frac{3(n-1)(3n-1)(3n-2)\bigl((2n-1)^2-4(\ell_1+\ell_2)^2\bigr)}{(2n-1)\,\PU(n;\ell)}\\
&\times\frac{(2n-1+2\ell_1-2z)(2n-1+2\ell_2-2z)(1+2z-\ell_1-\ell_2)}{(n-1-2z)(n-2-2z)(n-1+u)(1+2z-2\ell_1-2\ell_2)}.
\end{align*}
The summand of \eqref{eq:thm-upside} and $\rho_{\rU}$ come from the seed \textsf{Seed11} of the package \textsf{WZSeedTools} accompanying \cite{au2026harmonic},
\begin{multline*}
f_{11}(a,b,c,d;m)=(-1)^d(2a+3m)\,\frac{\G(a+m)\G(2b+m)\G(2c+m)}{\G(1+m)\G(1+a-b+m)\G(1+a-c+m)}\\
\times\frac{\G(d+m)\G(\frac12+a-d+m)\G(1+2a-2b-2c+m)}{\G(\frac12+b+c+m)\G(2d+m)\G(1+2a-2d+m)}\\
\times\frac{\G(1+a-b-d)\G(1+a-c-d)\G(\frac12+b+c-d)}{\G(b)\G(c)\G(\frac12+b-d)\G(\frac12+c-d)\G(\frac12+a-b-c)\G(1+a-b-c-d)} .
\end{multline*}
The summand is the WZ mate of $f_{11}$ in the direction $(-2,0,0,-1;1)$, restricted to the hyperplane $a=b+c-\frac12$ with $(b,c,d)=\frac12+(\ell_1,\ell_2,z)$, and $\rho_{\rU}$ comes from the step $d\mapsto d+1$. On this hyperplane the factor $\G(1+2a-2b-2c+m)/\G(\frac12+a-b-c)$ vanishes for $m\ge1$, as $(1+2a-b-c-d-e)_m$ does for the Dougall seed on $H$.

\FloatBarrier

\section*{Use of AI tools}
The questions studied here, and the expectation that cubic, sextic and octic analogues of the Borwein--Bradley identity exist, are the author's. Large language models (Claude Opus~5.5 by Anthropic and GPT-6 by OpenAI) were used throughout this research, under the author's direction, to explore Au's WZ seeds and to find the extensions in Section~\ref{sec:theorems} and their proofs. The certificates in Appendix~\ref{app:tables} were verified by exact computation in Mathematica. The author checked all results and takes full responsibility for the paper.

{\footnotesize
\bibliographystyle{unsrt}
\bibliography{paper_refs}
}

\bigskip
\noindent{\small Einstein Institute of Mathematics, The Hebrew University of Jerusalem, Israel\\
\textit{E-mail}: \texttt{noam.shalev2@mail.huji.ac.il}}

\end{document}